\theoremstyle{definition}
\newtheorem{defn}{Definition}[section]
\theoremstyle{plain}
\newtheorem{thm}{Theorem}[section]
\theoremstyle{remark}
\newtheorem{rmk}{Remark}[section]
\newcommand{\X}{\mathbf{X}}
\newcommand{\Y}{\mathbf{Y}}
\newcommand{\NN}{\mathbf{N}}
\newcommand{\E}{\mathbb{E}}
\newcommand{\SSw}{\mathbf{S}}
\def\P{{\rm I\kern-0.16em P}}
\def    \bs     {\boldsymbol}
\title{Multidimensional limit theorems for homogeneous sums: a general transfer principle}
\begin{document}

\title{Multidimensional limit theorems for homogeneous sums: a general transfer principle}
\author{I. Nourdin, G. Peccati, G. Poly, R. Simone}
%
\maketitle
\date{}

\begin{abstract} 
The aim of the present paper is to establish the multidimensional counterpart of the \textit{fourth moment criterion} for homogeneous sums in independent leptokurtic and mesokurtic random variables (that is, having  positive and zero fourth cumulant, respectively), recently established in \cite{NPPS} in both the classical and in the free setting. As a consequence, the transfer principle for the Central limit Theorem between Wiener and Wigner chaos can be extended to a multidimensional transfer principle between vectors of homogeneous sums in independent commutative random variables with zero third moment and with non-negative fourth cumulant, and homogeneous sums in freely independent non-commutative random variables with non-negative fourth cumulant. 
\end{abstract}

\textbf{Subject classification: } 60F17, 60F05, 46L54 \\
\textbf{Keywords: } Fourth Moment Phenomenon; Free Probability; Homogeneous Sums; Multidimensional Limit Theorems; Wiener Chaos; Wigner Chaos


\section{Introduction}
The \textit{fourth moment phenomenon} is a collection of probabilistic results, allowing one to deduce central limit theorems (in both the classical and free probability settings) for a sequence $\{X_n : n\geq 1\}$ of non-linear functionals of a random field, merely by controlling the sequences $\{\E X^2_n = n\geq 1\}$ and $\{\E X_n^4 : n\geq 1\}$ of the first two even moments. First discovered in \cite{NualartPeccati} in the context of non-linear transformations of Gaussian fields, such a phenomenon is gaining an increasing interest in the mathematical community, due to its wide range of applications. The reader is referred to the monograph \cite{NourdinPeccatilibro} for an introduction to the topic. See \cite{chen, chenpoly, pecsur} for recent surveys, as well as \cite{web} for a constantly updated account of the mathematical literature on this topic.\\

The present paper focuses on several multidimensional consequences of the fourth moment phenomenon, both in the classical and in the free setting. In particular, our goal is to apply the results from \cite{NPPS} in order to generalize the transfer principle for the Central Limit Theorem between Wiener and Wigner chaoses, established in \cite{NouSpeiPec}. Transfer principles of this type can be potentially very useful for establishing free counterparts to well-understood results in the classical settings . For instance, a remarkable example is given by the free version of the {\it Breuer-Major Theorem} pointed out in \cite{KempNourdinPeccatiSpeicher}.

The main results of the present note complete the findings of \cite{NPPS}, where the authors have described a new large class of random variables to which the \textit{fourth moment phenomenon} applies. This class contains in particular homogeneous sums in independent copies of a leptokurtic or mesokurtic random variable $X$ (that is, a random variable with positive or zero fourth cumulant respectively, in both the probability settings).  The proofs of these generalized \textit{Fourth Moment Theorems} involve some combinatorial arguments, since they rely on  new formulae for the fourth moment of homogeneous sums. In order to work out the proof,  an additional assumption  in the classical case has been needed, resulting in the requirement $\E[X^3]=0$. The same assumptions have allowed the authors to establish that homogeneous sums in leptokurtic or mesokurtic random variables verify also an invariance principle for central convergence, customarily referred to as \textit{universality phenomenon}, in both the probability settings. \\

Further results dealing with universality and Fourth Moment Theorem in the free  setting include \cite{Solesne2,Solesne,NourdinDeya, NourdinPeccati1, Simone}, and for the commutative framework \cite{Guillaume1, NourdinPeccatiReinert, NourdinPeccatiReveillac, PeccatiZheng2, PeccatiZheng1}. Further, the analysis of the fourth moment phenomenon for infinitely divisible laws has been addressed in \cite{Arizmendi1} while, more recently, limit theorems encompassing the fourth moment and the universality phenomena have been investigated also in the setting of the random graphs colouring problem \cite{Diaconis}. \\

As already mentioned, the proofs we present do not require any additional techniques with respect to those developed in \cite{NPPS}, of which this paper is meant to be a sequel. The main results that will be proved in the present work are Theorem \ref{ComponentJoint} and its free version Theorem \ref{ComponentJointFree}, in which we prove that joint and componentwise central convergence are equivalent for vectors of homogeneous sums in independent copies of leptokurtic or mesokurtic variables, both in the commutative and in the non-commutative framework. The combination of these results lead to the formulation of the general transfer principle for (vectors of) such random variables, achieved via Theorem \ref{Transfer}.

\section{Preliminaries}

Before stating our main results, some preliminary notations and definitions need to be fixed. For any unexplained concept or result pertaining to free probability theory, the reader is referred to the fundamental references \cite{Speicher,Voiculescu}.

For every $n\in \mathbb{N}$, set $[n] := \{1,\dots,n\}$. Let $X$ be a random variable defined on a fixed probability space $(\Omega,\mathcal{F},\mathbb{P})$. Unless otherwise specified, it will be always assumed that $X$ satisfies the following assumptions, that will be referred to as Assumption {\bf (1)}:
\begin{itemize}
\item[(i)] $X$ is centered and has unit variance;
\item[(ii)] $\E[X^3]=0$;
\item[(iii)] there exists $\epsilon > 0$ such that $\E[\vert X\vert^{4+\epsilon}] < \infty$.
\end{itemize}

Given  a sequence $\X=\{X_i\}_{i\geq 1}$  of independent copies of $X$ (i.i.d. for short), 
we will consider random variables having the form of multilinear homogeneous  polynomials of degree $d\geq 2$:
\begin{eqnarray}\label{e:alv}
Q_{\X}(f) &=&\sum_{i_1,\dots,i_d =1}^n f(i_1,\dots,i_d) X_{i_1}\cdots X_{i_d},\label{F}
\end{eqnarray}
where the mapping $f:[n]^d \rightarrow \mathbb{R}$ is an {\it admissible kernel}, in the sense of the following definition.

\begin{defn}
\label{Admissible}
For a given degree $d\geq 2$ and some integer $n\geq 1$, a function $f:[n]^d\to\mathbb{R}$ is said to be an {\it admissible kernel} if the following properties are satisfied:
\begin{itemize}
\item[(i)] $f$ vanishes on diagonals, that is, $f(i_1,\dots,i_d)=0$ whenever  $i_j=i_k$ for some $k\neq j$;
\item[(ii)] $f$ is symmetric, namely $f(i_1,\dots,i_d)=f(i_{\sigma(1)},\dots,i_{\sigma(d)})$ for any permutation $\sigma$ of $\{1,\dots,d\}$ and any $(i_1,\dots,i_d)\in [n]^d$;
\item[(iii)] $f$ satisfies the normalization: $$d!\sum\limits_{i_1,\dots,i_d =1}^n f(i_1,\dots,i_d)^2=1.$$
\end{itemize}
\end{defn}

Since $f$ is an admissible kernel and $X$ satisfies Assumption {\bf (1)},  the homogenous sum $Q_{\X}(f)$ verifies $\E[Q_{\X}(f) ]=0$ and $\E[Q_{\X}(f)^2]=1$.

\begin{rmk} Note that the symmetry and the normalization assumptions on $f$ are introduced for mere convenience: indeed, given a function $f:[n]^d\to\mathbb{R}$ that is vanishing on diagonals, it is always possible to generate an admissible kernel $\tilde{f}$ by first symmetrizing $f$ and then by properly renormalizing it. 
\end{rmk}

As already discussed in the Introduction, the goal of the present paper is to complete the findings of \cite{NPPS}, in particular providing an extension of the results therein to the multidimensional setting. In order to achieve our goals, we shall need the following definitions.

\begin{defn}\label{defcom}
Let $X$ be a random variable verifying Assumption {\bf (1)}, $\X=\{X_i\}_{i\geq 1}$ be a sequence of independent copies of $X$.
\begin{itemize}
\item[(a)] We say that $X$ satisfies \textit{the Fourth Moment Theorem at the order $d\geq 2$} (for normal approximations of homogeneous sums) if, for every sequence $f_n:[n]^d\to\mathbb{R}$ of admissible kernels, the following statements are equivalent for $n\to\infty$:
\begin{enumerate}
\item[(i)] $Q_{\X}(f_n) \xrightarrow{\text{\rm Law}} \mathcal{N}(0,1)$. 
\item[(ii)]  $\E[Q_{\X}(f_n)^4]\to \E[N^4]=3$, where $N \sim \mathcal{N}(0,1)$.
\end{enumerate}
\item[(b)] $X$ is said to be \textit{universal at the order $d$} (for normal approximations of homogeneous sums) if, for any sequence $f_n:[n]^d\to\mathbb{R}$ of admissible kernels, $Q_{\X}(f_n) \xrightarrow{\text{\rm Law}}\mathscr{N}(0,1)$ implies, as $n\to\infty$,
$$ \tau_n:= \max_{ i=1,\dots, n} \mathrm{Inf}_i(f_n) \longrightarrow 0, $$
where $\mathrm{Inf}_i(f_n):=\sum\limits_{i_2,\ldots,i_d=1}^n f_n(i ,i_2,\ldots,i_d)^2$ is the $i$-th \textit{influence function} of $f_n$.\\
\end{itemize}
\end{defn}

As shown originally in \cite{NualartPeccati} and \cite{NourdinPeccatiReinert}, if $X$ is normally distributed then it verifies both Points (a) and (b) in Definition \ref{defcom}. In this case, the corresponding homogeneous sums $Q_{\bf X}(f_n)$ are said to be elements of the $d$th (Gaussian) {\it Wiener chaos} associated with ${\bf X}$ (see e.g. \cite{PecTaq} for an introduction to these concepts). We observe that in \cite{NourdinPeccatiReinert} it is proved that the Gaussian Wiener chaos is also universal with respect to Gamma approximations. A crucial role in the proof of the universal behavior of the Gaussian Wiener chaos  has been played by the findings in \cite{Mossel}, where the authors have measured the proximity in law between homogeneous sums in terms of \textit{influence functions}. The next statement records the estimates from \cite{Mossel} that are needed for our discussion.

\begin{thm}\label{invMossel}
Let $\bs{X}=\{X_i\}_{i\geq 1}$ and $\mathbf{Y}=\{Y_i\}_{i\geq 1}$  be sequences of independent centered random variables on a fixed probability space, with unit variance and uniformly bounded moments of every order. Then, for $d\geq 1$ and for every sequence of admissible kernels $f_n:[n]^d \rightarrow \mathbb{R}$, 
\begin{equation*}
\mathbb{E}[Q_{\bs{X}}(f_n)^m] - \mathbb{E}[Q_{\bs{Y}}(f_n)^m] = \mathcal{O}\big( \sqrt{\tau_n}\big)\quad \forall \, m \in \mathbb{N},\\
\end{equation*}
where $\tau_n := \max\limits_{i=1,\dots,n}\mathrm{Inf}_{i}(f_n)$.
\end{thm}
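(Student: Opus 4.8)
The plan is to prove Theorem~\ref{invMossel} by the classical Lindeberg replacement (swapping) argument, where the only nonstandard point is that the first-order Taylor estimates are boosted by a hypercontractivity input in order to extract the gain of order $\sqrt{\tau_n}$. Writing $Q(\xi_1,\dots,\xi_n):=\sum_{i_1,\dots,i_d} f_n(i_1,\dots,i_d)\,\xi_{i_1}\cdots\xi_{i_d}$ for the homogeneous sum as a function of its inputs, I would introduce the hybrid variables
\[
W_k := Q(Y_1,\dots,Y_k,X_{k+1},\dots,X_n),\qquad 0\le k\le n,
\]
so that $W_0=Q_{\bs X}(f_n)$ and $W_n=Q_{\bs Y}(f_n)$. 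Then $\E[Q_{\bs X}(f_n)^m]-\E[Q_{\bs Y}(f_n)^m]=\sum_{k=1}^n\big(\E[W_{k-1}^m]-\E[W_k^m]\big)$, and it suffices to bound each summand, since $W_{k-1}$ and $W_k$ differ only in their $k$-th input ($X_k$ versus $Y_k$).

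For the single swap I would exploit the admissibility of $f_n$: because $f_n$ vanishes on diagonals, every monomial contains the index $k$ at most once, so $W_{k-1}$ and $W_k$ are \emph{affine} in their $k$-th input. Concretely, $W_{k-1}=U_k+X_kV_k$ and $W_k=U_k+Y_kV_k$, where $U_k$ (the part free of index $k$) and $V_k$ (the degree-$(d-1)$ homogeneous sum $V_k=d\sum_{i_2,\dots,i_d}f_n(k,i_2,\dots,i_d)\,\xi_{i_2}\cdots\xi_{i_d}$ collecting the coefficient of the $k$-th variable) are the \emph{same} in both expressions and are independent of $X_k$ and $Y_k$. Expanding binomially and using this independence gives
\[
\E[W_{k-1}^m]-\E[W_k^m]=\sum_{j=0}^m\binom{m}{j}\,\E[U_k^{m-j}V_k^{\,j}]\big(\E[X_k^j]-\E[Y_k^j]\big).
\]
Since $X_k$ and $Y_k$ are both centered with unit variance, the terms $j=0,1,2$ cancel, and the sum effectively starts at $j=3$; the uniform moment bounds make every factor $|\E[X_k^j]-\E[Y_k^j]|$ bounded by a constant $C_j$.

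It remains to bound $\E[U_k^{m-j}V_k^{\,j}]$ for $j\ge 3$. By Hölder's inequality with exponents $\tfrac{m}{m-j}$ and $\tfrac{m}{j}$ one gets $|\E[U_k^{m-j}V_k^{\,j}]|\le \|U_k\|_m^{\,m-j}\,\|V_k\|_m^{\,j}$, and here hypercontractivity for multilinear forms in independent random variables with uniformly bounded moments lets me replace the $L^m$ norms by $L^2$ norms up to a constant depending only on $m$ and $d$. A direct orthogonality computation using that the inputs are centered, unit-variance and that $f_n$ vanishes on diagonals yields $\E[W_{k-1}^2]=\E[U_k^2]+\E[V_k^2]=1$, so $\|U_k\|_2\le 1$, and $\E[V_k^2]=d\cdot d!\,\mathrm{Inf}_k(f_n)$. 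Hence each swap costs at most $C_m\sum_{j\ge3}\|V_k\|_2^{\,j}\le C_m'\,\mathrm{Inf}_k(f_n)^{3/2}$, using that the normalization forces $\mathrm{Inf}_k(f_n)\le 1/d!\le 1$. Summing over $k$ and factoring out the maximal influence gives
\[
\big|\E[Q_{\bs X}(f_n)^m]-\E[Q_{\bs Y}(f_n)^m]\big|\le C_m'\sum_{k=1}^n\mathrm{Inf}_k(f_n)^{3/2}\le C_m'\sqrt{\tau_n}\sum_{k=1}^n\mathrm{Inf}_k(f_n)\le \tfrac{C_m'}{d!}\sqrt{\tau_n},
\]
which is the claimed $\mathcal{O}(\sqrt{\tau_n})$ estimate.

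I expect the main obstacle to be the hypercontractivity step: one needs a uniform comparison $\|P\|_m\le C_{m,d}\|P\|_2$ valid for every multilinear polynomial $P$ of degree at most $d$ in the \emph{mixed} independent family $\{Y_1,\dots,Y_k,X_{k+1},\dots,X_n\}$, which is exactly where the hypothesis of uniformly bounded moments of every order is used. Everything else—the telescoping, the affine decomposition, the cancellation of the first three moments, and the summation of the influences—is elementary; the cancellation up to second order is what replaces a naive $\mathcal{O}(1)$ bound by the $j\ge 3$ terms responsible for the power $\tfrac32$ and hence for the final factor $\sqrt{\tau_n}$.
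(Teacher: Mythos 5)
Your Lindeberg replacement argument with hypercontractivity is correct, and it is essentially the proof given in the cited source: the paper itself offers no proof of Theorem~\ref{invMossel}, presenting it as a quotation of the estimates of Mossel, O'Donnell and Oleszkiewicz \cite{Mossel} (see also Nourdin--Peccati--Reinert \cite{NourdinPeccatiReinert}), whose argument is exactly the telescoping swap, the affine decomposition $W_{k-1}=U_k+X_kV_k$ coming from the vanishing on diagonals, cancellation of the moments up to order two, and the $(2,m)$-hypercontractive comparison of $L^m$ and $L^2$ norms of multilinear forms. All the quantitative steps you give (in particular $\E[V_k^2]=d\cdot d!\,\mathrm{Inf}_k(f_n)$ and $\sum_k\mathrm{Inf}_k(f_n)^{3/2}\le\sqrt{\tau_n}/d!$) check out.
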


\begin{rmk}
Note that the issue of universality is relevant only for homogeneous sums of degree $d\geq 2$, since no invariance principle holds for homogeneous sums of degree $d=1$ (see \cite{NourdinPeccatiReinert}). For degrees $d\geq 2$ and in view of Theorem \ref{invMossel}, one could alternatively define $X$ to be universal at the order $d$ if, for any sequence $f_n:[n]^d\to\mathbb{R}$ of admissible kernels, $Q_{\X}(f_n) \xrightarrow{\text{\rm Law}}\mathscr{N}(0,1)$ implies $Q_{\mathbf{Z}}(f_n) \xrightarrow{\text{\rm Law}}\mathscr{N}(0,1)$
for every sequence $\mathbf{Z}$ of independent copies of a centered random variable having unit variance.\\
\end{rmk}

We now turn to the non-commutative setting. Consider a fixed non commutative probability space $(\mathcal{A},\varphi)$, where $\mathcal{A}$ is a unital $\ast$-algebra, and $\varphi$ is a unital, faithful and positive trace. Let $Y$ be a random variable on it that is centered and that has unit variance, that is, $\varphi(Y)=0$ and $\varphi(Y^2)=1$. In this case, it will be said for short that $Y$ satisfies Assumption {\bf (2)}. \\

If $\Y=\{Y_i\}_{i\geq 1}$ is a sequence of freely independent copies of $Y$,   the free counterpart to random variables of the form \eqref{F} are self-adjoint elements of the type:
\begin{eqnarray}
Q_{\Y}(f) &=&\sum_{i_1,\dots,i_d =1}^n f(i_1,\dots,i_d) Y_{i_1}\cdots Y_{i_d},\label{Fnoncom}
\end{eqnarray}
where $f$ is an admissible kernel.

From Assumption {\bf (2)} and the properties of $f$, it follows that $\varphi(Q_{\Y}(f) )=0$ and $d!\varphi(Q_{\Y}(f)^2)=1$.\\

\begin{rmk}
In the free setting, the natural choice for the coefficient of a homogeneous sum would be a \textit{mirror symmetric function}, namely a kernel $f:[n]^d \rightarrow \mathbb{C}$ such that $f(i_1,i_2, \dots, i_d) = \overline{f(i_d,\dots, i_2,i_1)}$ for every $i_1,\dots, i_d \in [n]$, with $\bar{z}$ denoting the complex conjugate of $z$. This assumption is the weakest possible to ensure that the element $Q_{\Y}(f)$ is self-adjoint. However, the forthcoming discussion will heavily rely on the universality property of the Wigner Semicircle law that has been so far established only for homogeneous sums with symmetric real-valued coefficients: indeed, both in \cite{Solesne2} and in \cite{NourdinDeya}, counterexamples to the universality for mirror symmetric kernels have been provided.  \\
\end{rmk}

For several reasons, the semicircular distribution is considered  as the non-commutative analogue of the Gaussian distribution: for instance, it is the limit law for the free version of the Central Limit Theorem, and joint moments of a semicircular system satisfy a Wick-type formula \cite{Speicher}. A very interesting fact for our purposes is that the semicircular law satisfies both the Fourth Moment Theorem and the universality property as to semicircular approximations (see \cite{NourdinDeya}), thus justifying the following definitions.  

\begin{defn}\label{defnoncom}
Let $Y$ satisfy Assumption {\bf (2)} and let $S$ be a standard semicircular random variable, for short $S \sim \mathcal{S}(0,1)$. Let $\bs{Y}=\{Y_i\}_{i\geq 1}$ and $\bs{S}=\{S_i\}_{i\geq 1}$ be sequences of  freely independent copies of $Y$ and $S$ respectively. For a fixed $d\geq 2$ and for every $n\geq 1$, let $f_n:[n]^d\to\mathbb{R}$ be an admissible kernel.  
\begin{itemize}
\item[(a)] We say that $Y$ satisfies the {\it free Fourth Moment Theorem} of order $d$ (for central approximations) if, for any sequence $f_n:[n]^d\to\mathbb{R}$ of admissible kernels, the following statements are equivalent as $n\to\infty$:
\begin{itemize}
\item[(i)] $d!^2\varphi(Q_{\Y}(f_n)^4)\rightarrow \varphi(S^4) = 2, \quad S \sim \mathcal{S}(0,1)$;
\item[(ii)] $ \sqrt{d!} \,Q_{\Y}(f_n) \xrightarrow{\text{\rm Law}}\mathcal{S}(0,1).$
\end{itemize}
\item[(b)]  We say that $X$ is {\it free universal} at the order $d$ (for central approximations) if, for any sequence $f_n:[n]^d\to\mathbb{R}$ of admissible kernels,  $\sqrt{d!} \,Q_{\Y}(f_n) \xrightarrow{\text{\rm Law}}\mathcal{S}(0,1)$ implies, as $n\rightarrow \infty$,
$$\tau_n = \max_{i=1,\dots,n}\mathrm{Inf}_i(f_n) \rightarrow 0.$$
\end{itemize}
\end{defn}

The findings established with \cite[Theorem 3.2]{Simone} provide a general multidimensional version of Theorem \ref{invMossel} in the free probability setting. Here the invariance principle will be formulated via Theorem \ref{Multiinvariance2} for estimating the proximity in law between vectors of homogeneous sums.

\begin{thm}
\label{Multiinvariance2} 
Let $\bs{X} = \{X_i\}_{i \geq 1}$  and $\bs{Y}=\{Y_j\}_{j\geq 1}$ be sequences of freely independent random variables, centered and with unit variance, freely independent between each other. Assume further that $\bs{X}$ and $\bs{Y}$ are composed of random variables with uniformly bounded moments, that is, for every integer $r\geq 1$,
$$ \sup_{i\geq 1} \varphi(|X_i|^{r}) < \infty \qquad (\text{resp. } \sup_{i\geq 1} \varphi(|Y_i|^{r}) < \infty).$$
For every integer $k\geq 1$, for every choice of $\bs{m}_s= (m_{s,1},\dots,m_{s,p}) \in \mathbb{N}^p$ for $s=1,\dots,k$, if $ \bs{Q}_n(\bs{Y}) = (Q_{\bs{Y}}(f_n^{(1)}),\dots,Q_{\bs{Y}}(f_n^{(p)}))$ denotes a vector of homogeneous sums with  admissible kernel  $f_n^{(j)}: [n]^{d}\rightarrow \mathbb{R}$ for every $j=1,\dots,p$, then:
\begin{small}
\begin{align}
\varphi\big(\bs{Q}_n(\bs{X})^{\bs{m}_1}\bs{Q}_n(\bs{X})^{\bs{m}_2}\cdots \bs{Q}_n(\bs{X})^{\bs{m}_k} \big)  &- \varphi\big(\bs{Q}_n(\bs{Y})^{\bs{m}_1}\bs{Q}_n(\bs{Y})^{\bs{m}_2}\cdots \bs{Q}_n(\bs{Y})^{\bs{m}_k}\big) \nonumber\\
&= \mathcal{O}\big(\max\limits_{j=1,\dots,p}(\tau_{n}^{(j)})^{\frac{1}{2}}\big),
\end{align}
\end{small}
where $\tau_n^{(j)} = \max\limits_{i=1,\dots,n}\mathrm{Inf}_i(f_n^{(j)})$ and where for $\bs{m}=(m_1,\dots,m_p)\in \mathbb{N}^p$ we have used the standard multi-index notation $\bs{Q}_n(\bs{Y})^{\bs{m}}:= Q_{\bs{Y}}(f_n^{(1)})^{m_1}Q_{\bs{Y}}(f_n^{(2)})^{m_2}\cdots Q_{\bs{Y}}(f_n^{(p)})^{m_p}$.
\end{thm}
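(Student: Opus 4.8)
The plan is to regard the mixed moment $\varphi\big(\bs{Q}_n(\bs X)^{\bs m_1}\cdots \bs{Q}_n(\bs X)^{\bs m_k}\big)$ as the trace of a single noncommutative word and to flatten it into a scalar sum via the free moment--cumulant formula. Write $W_{\bs X}$ for this word, let $M=\sum_{s=1}^k\sum_{j=1}^p m_{s,j}$ be the number of homogeneous-sum factors and $L=dM$ the total degree. Expanding each factor $Q_{\bs X}(f_n^{(j)})$ as in \eqref{Fnoncom} and multiplying out, one gets $W_{\bs X}=\sum_{\mathbf i\in[n]^L} c(\mathbf i)\, X_{i_1}\cdots X_{i_L}$, where $\mathbf i=(i_1,\dots,i_L)$ and the coefficient $c(\mathbf i)$ is the product of the $M$ kernel evaluations dictated by the grouping of the positions $\{1,\dots,L\}$ into the successive factors. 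Applying $\varphi$ and the free moment--cumulant formula gives $\varphi(X_{i_1}\cdots X_{i_L})=\sum_{\pi\in\mathrm{NC}(L)}\prod_{B\in\pi}\kappa_{|B|}\big(\{X_{i_b}\}_{b\in B}\big)$, and since $\bs X$ is a free family the cumulant attached to a block $B$ vanishes unless $\mathbf i$ is constant on $B$, in which case it reduces to the single-variable free cumulant $\kappa^{\bs X}_{|B|}(\mathbf i)$ of that common $X_{i_b}$.

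Subtracting the identical expansion for $\bs Y$, the index ordering being already absorbed into $\mathrm{NC}(L)$, one obtains
\[
\varphi(W_{\bs X})-\varphi(W_{\bs Y})=\sum_{\pi\in\mathrm{NC}(L)}\ \sum_{\substack{\mathbf i\in[n]^L\\ \mathbf i\ \mathrm{const.\ on}\ \pi}} c(\mathbf i)\Big(\prod_{B\in\pi}\kappa^{\bs X}_{|B|}(\mathbf i)-\prod_{B\in\pi}\kappa^{\bs Y}_{|B|}(\mathbf i)\Big).
\]
Both families being centered with unit variance, $\kappa^{\bs X}_1=\kappa^{\bs Y}_1=0$ and $\kappa^{\bs X}_2=\kappa^{\bs Y}_2=1$; hence every $\pi$ carrying a singleton contributes $0$ on both sides, while on partitions whose blocks are all of size $2$ the two products coincide and cancel. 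Thus only partitions $\pi$ with no singleton and at least one block of cardinality $\geq 3$ survive, and the uniform moment bounds on $\bs X,\bs Y$ make each surviving cumulant weight $O(1)$ uniformly in $n$.

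It then remains to bound, for each surviving $\pi$, the quantity $\sum_{\mathbf i\ \mathrm{const.\ on}\ \pi}|c(\mathbf i)|$ by $\mathcal{O}\big(\max_j(\tau_n^{(j)})^{1/2}\big)$. Here admissibility is decisive: since each $f_n^{(j)}$ vanishes on diagonals, the indices inside a single factor are pairwise distinct, so the $\geq 3$ positions of a large block must lie in distinct factors. Isolating such a block $B_0$ and one factor $j_0$ meeting it, I would apply Cauchy--Schwarz in the common index of $B_0$: this extracts one factor $\big(\max_i\mathrm{Inf}_i(f_n^{(j_0)})\big)^{1/2}\le(\tau_n^{(j_0)})^{1/2}$, while the remaining index summations recombine, again by Cauchy--Schwarz along the size-$2$ blocks, into products of complete influence sums $\sum_i\mathrm{Inf}_i(f_n^{(j)})=1/d!$, which are bounded. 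Summing over the finitely many admissible $\pi$ yields the announced rate.

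The main obstacle is precisely the bookkeeping of this last step: one must arrange the Cauchy--Schwarz applications so that exactly one half-power of an influence maximum is produced by the excess block while all remaining summations close up into $O(1)$ influence sums, uniformly in $n$ and over every non-crossing partition carrying a block of size $\geq 3$. Non-commutativity causes no additional difficulty, since the whole estimate has been reduced to a purely combinatorial control of contractions of the kernels $f_n^{(j)}$; this is exactly the computation carried out in \cite[Theorem 3.2]{Simone}, to which we refer for the complete details.
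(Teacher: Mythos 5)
The paper does not actually prove this theorem: it is imported verbatim (in simplified form, for homogeneous sums) from \cite[Theorem 3.2]{Simone}, so there is no internal proof to compare yours against line by line. Your outline is the correct strategy, and indeed it is the moment--cumulant/influence argument that underlies the cited result: expand the word over $[n]^L$, apply the free moment--cumulant formula, use vanishing of mixed free cumulants to restrict to indices constant on each block, kill singletons with $\kappa_1=0$, cancel the pure pairings using $\kappa_2^{\bs X}=\kappa_2^{\bs Y}=1$, and observe that vanishing on diagonals forces the $\geq 3$ positions of any surviving block into distinct factors, which is what makes an influence factor extractable. However, you explicitly stop at the one genuinely nontrivial step --- the uniform bound $\sum_{\mathbf i\ \mathrm{const.\ on}\ \pi}|c(\mathbf i)|=\mathcal{O}\big(\max_j(\tau_n^{(j)})^{1/2}\big)$ for every non-crossing $\pi$ with a block of size $\geq 3$ --- and defer it to the very reference the paper cites. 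As a self-contained proof this is therefore incomplete at exactly the hard point; as a treatment of the statement it is no less detailed than the paper's, which consists of the citation alone.

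One structural remark worth making: the proofs in \cite{NourdinDeya} and \cite{Simone} proceed by a Lindeberg-type replacement of the $X_i$ by the $Y_i$ one at a time, which is why the hypothesis that $\bs X$ and $\bs Y$ are free \emph{from each other} appears in the statement. Your direct term-by-term subtraction of the two cumulant expansions never uses that hypothesis, so if the combinatorial bound were carried out your route would in fact prove a marginally cleaner statement (no joint realization of the two families needed). That is a genuine, if minor, difference of approach; it does not affect correctness, but it does mean your sketch is not simply a paraphrase of the cited proof, and completing it would require doing the contraction bookkeeping from scratch rather than quoting it.
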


\begin{rmk}
It is worth to remark that Theorem \ref{Multiinvariance2} has been originally formulated for more general objects than homogeneous sums, and that in the above simplified formulation for homogeneous sums, it encompasses \cite[Theorem 1.3]{NourdinDeya} corresponding to $p=1$.
\end{rmk}

By virtue of Theorem \ref{Multiinvariance2}, the definition of free universal law can be equivalently reformulated by saying that $X$ is freely universal (at the order $d$) if, for any sequence $f_n:[n]^d\to\mathbb{R}$ of admissible kernels, $\sqrt{d!} \,Q_{\Y}(f_n) \xrightarrow{\text{\rm Law}}\mathcal{S}(0,1)$ implies $\sqrt{d!} \,Q_{\mathbf{Z}}(f_n) \xrightarrow{\text{\rm Law}}\mathcal{S}(0,1)$ for every sequence $\mathbf{Z} = \{Z_i\}_{i\geq 1}$ of freely independent and identically distributed random variables verifying Assumption {\bf (2)}.\\

Theorems \ref{superTeo1} and \ref{superTeo2} below provide the initial impetus for our investtigations; their proofs can be found in \cite{NPPS}. To fix the notation, $\chi_4(X)=\E[X^4]-3$ and $\kappa_4(Y)=\varphi(Y^4)-2$ will denote, respectively, the fourth cumulant of a random variable $X$ satisfying Assumption {\bf (1)} and the fourth free cumulant of a non-commutative random variable $Y$ verifying Assumption {\bf (2)}.

\begin{thm}
\label{superTeo1}
Fix $d\geq 2$ and let $X$ be a random variable satisfying Assumption {\bf (1)}. If $\E[X^4]\geq 3$ (or, equivalently, $\chi_4(X)\geq 0$), then $X$ satisfies the Fourth Moment Theorem and its law is universal at the order $d$ for normal approximations of homogeneous sums, in the sense of Definition \ref{defcom}.
\end{thm}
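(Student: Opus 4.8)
The plan is to prove both directions of the Fourth Moment Theorem (part (a) of Definition \ref{defcom}) and the universality statement (part (b)) simultaneously, by reducing everything to the already-known behaviour of the Gaussian Wiener chaos through an \emph{exact} fourth-moment decomposition. Throughout, write $\bs{G}=\{G_i\}_{i\geq 1}$ for a sequence of i.i.d.\ standard Gaussians and $Q_{\bs{G}}(f)$ for the associated homogeneous sum, an element of the $d$-th Wiener chaos. The implication (i)$\Rightarrow$(ii) of part (a) is the easy one: since $Q_{\X}(f_n)\to\mathcal{N}(0,1)$ in law, it suffices to upgrade this to convergence of fourth moments, which follows from uniform integrability of $\{Q_{\X}(f_n)^4\}$. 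This is where hypothesis (iii) enters: $\E[\vert X\vert^{4+\epsilon}]<\infty$ yields, via standard multilinear moment estimates (as in \cite{Mossel}), a uniform bound $\sup_n\E[\vert Q_{\X}(f_n)\vert^{4+\epsilon'}]<\infty$ for some $\epsilon'>0$, hence the required uniform integrability.

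The heart of the matter, and the step I expect to be the main obstacle, is an exact combinatorial formula for $\E[Q_{\X}(f)^4]$. Expanding the fourth power and taking expectations, the contribution of each monomial factorizes over the distinct index-values, a value of multiplicity $k$ producing the moment $\E[X^k]$. Two features of Assumption \textbf{(1)} prune this expansion drastically: since admissible kernels vanish on diagonals, no value occurs more than once inside a single copy of $Q_{\X}(f)$, so multiplicities are at most $4$; and since $\E[X^3]=0$, every configuration containing a value of odd multiplicity (in particular $3$) is annihilated. Hence only multiplicities $2$ and $4$ survive, contributing $\E[X^2]=1$ and $\E[X^4]=3+\chi_4(X)$ respectively. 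Grouping configurations by the number $m$ of multiplicity-$4$ values yields a decomposition of the form
\begin{equation*}
\E[Q_{\X}(f)^4]-3=\big(\E[Q_{\bs{G}}(f)^4]-3\big)+\sum_{m\geq 1}\big[(3+\chi_4(X))^m-3^m\big]\,D_m(f),
\end{equation*}
where the $m=0$ block reproduces exactly the Gaussian fourth moment (whose excess over $3$ is, by \cite{NualartPeccati}, a nonnegative sum of contraction norms $\Vert f\star_r f\Vert$, $1\le r\le d-1$), and $D_m(f)$ collects the configurations with exactly $m$ quadruples. The delicate point is to show $D_m(f)\geq 0$ for each $m$: the individual configurations carry no definite sign, and one must reorganize them, by symmetrization and completion of squares, into manifestly nonnegative functionals of $f$. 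Granting this, the bracket $(3+\chi_4(X))^m-3^m$ is nonnegative \emph{precisely because} $\chi_4(X)\geq 0$, so the entire right-hand side is a sum of nonnegative quantities (this is where the lepto/mesokurtic hypothesis is used, and where it would fail for $\chi_4<0$).

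With the decomposition in hand, the remaining implications follow cleanly and uniformly over the cases $\chi_4(X)>0$ and $\chi_4(X)=0$. For (ii)$\Rightarrow$(i) of part (a), assume $\E[Q_{\X}(f_n)^4]\to 3$. Nonnegativity forces every summand to vanish, in particular the Gaussian block, so $\E[Q_{\bs{G}}(f_n)^4]\to 3$; by the Nualart--Peccati theorem this gives $Q_{\bs{G}}(f_n)\to\mathcal{N}(0,1)$. Now invoke the universality of the Gaussian Wiener chaos from \cite{NourdinPeccatiReinert}: Gaussian central convergence of $Q_{\bs{G}}(f_n)$ forces $\tau_n=\max_i\mathrm{Inf}_i(f_n)\to 0$. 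With $\tau_n\to 0$, a Lindeberg-type swapping argument in the spirit of \cite{Mossel} (the distributional incarnation of Theorem \ref{invMossel}) compares $Q_{\X}(f_n)$ with $Q_{\bs{G}}(f_n)$: replacing the $X_i$ by $G_i$ one at a time, the per-step error is of fourth order because the first three moments of $X$ and $G$ coincide (here (i)--(ii) of Assumption \textbf{(1)} are used), and the $(4+\epsilon)$-th moment controls the Taylor remainder, the total error being $\mathcal{O}(\sqrt{\tau_n})\to 0$; a truncation of $X$ handles the absence of moments beyond order $4+\epsilon$. Hence $Q_{\X}(f_n)$ and $Q_{\bs{G}}(f_n)$ share the limit $\mathcal{N}(0,1)$, completing (ii)$\Rightarrow$(i).

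Finally, for the universality statement (b) on its own, suppose $Q_{\X}(f_n)\to\mathcal{N}(0,1)$. By the easy direction already established, $\E[Q_{\X}(f_n)^4]\to 3$, and the argument of the previous paragraph then delivers $\tau_n\to 0$; thus $X$ is universal at order $d$. The entire probabilistic content therefore reduces to the Gaussian case via \cite{NualartPeccati}, \cite{NourdinPeccatiReinert} and the invariance principle, and the only genuinely hard ingredient remains the nonnegativity of the functionals $D_m(f)$ in the fourth-moment decomposition.
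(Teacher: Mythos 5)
You should first note that the paper you are working from does not actually prove Theorem \ref{superTeo1}: it is imported verbatim from \cite{NPPS} (``their proofs can be found in \cite{NPPS}''), so the only meaningful comparison is with the argument given there. Your overall architecture coincides with it: the easy implication via uniform integrability (hypercontractive moment bounds supplied by hypothesis (iii) of Assumption {\bf (1)}), an exact combinatorial expansion of $\E[Q_{\X}(f)^4]$ in which the vanishing-on-diagonals property caps multiplicities at $4$ and the assumption $\E[X^3]=0$ kills all odd multiplicities, identification of the $\chi_4$-free block with $\E[Q_{\bs{G}}(f)^4]$, and then the chain Nualart--Peccati $\Rightarrow$ \cite{NourdinPeccatiReinert} (to get $\tau_n\to0$) $\Rightarrow$ Mossel--O'Donnell--Oleszkiewicz invariance (to transfer the CLT back to $\X$). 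That reduction is correct and contains no circularity.

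The genuine gap is exactly the step you ``grant'': the nonnegativity of the coefficients in the fourth-moment decomposition. This is not a technical detail to be deferred --- it \emph{is} the theorem. Everything else in your argument is assembled from previously known results, and the statement is false without the positivity: for Rademacher variables ($\chi_4=-2$) both the Fourth Moment Theorem and universality fail, so the positivity cannot follow from soft symmetry considerations and must exploit the precise combinatorics of the pairings. Moreover, the specific form in which you assert it, namely $D_m(f)\geq 0$ for each $m\geq 1$ where $D_m$ aggregates \emph{all} configurations with exactly $m$ multiplicity-four values, is a stronger claim than what is needed and is not obviously true: the distinctness constraints on the remaining paired values introduce inclusion--exclusion corrections of indefinite sign inside each $D_m$. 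What \cite{NPPS} actually establishes is an identity in which the coefficient of each power $\chi_4(X)^k$, $k\geq 1$, is exhibited as an explicit nonnegative quantity (sums of squares of partial contractions of $f$), which is the weaker positivity $E_k(f):=\sum_{m\geq k}\binom{m}{k}3^{m-k}D_m(f)\geq 0$ after re-expanding $(3+\chi_4)^m$. Until you either prove $D_m(f)\geq 0$ or produce the sum-of-squares representation of the $E_k(f)$, the proof is incomplete at its only essential point.
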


\begin{thm}
\label{superTeo2}
Fix $d\geq 2$ and consider a random variable $Y$ verifying Assumption {\bf (2)} and such that $\varphi(Y^4) \ge 2$ (or, equivalenty, $\kappa_4(Y) \geq 0$). Then, $Y$ satisfies the free Fourth Moment Theorem and it is free universal at the order $d$ for semicircular approximations of free homogeneous sums. 
\end{thm}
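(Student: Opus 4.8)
The plan is to deduce both assertions of Definition \ref{defnoncom} from a single chain of implications, relying on the already available Wigner-chaos results and on the invariance principle recorded in Theorem \ref{Multiinvariance2}. In part (a), the implication (ii)$\Rightarrow$(i) is essentially free of content: in the present non-commutative framework convergence in law to $\mathcal{S}(0,1)$ amounts to convergence of all moments, so that $\sqrt{d!}\,Q_{\Y}(f_n)\xrightarrow{\text{\rm Law}}\mathcal{S}(0,1)$ forces in particular $d!^2\varphi(Q_{\Y}(f_n)^4)\to\varphi(S^4)=2$. All the substance therefore lies in the reverse implication (i)$\Rightarrow$(ii), and I will arrange the argument so that it simultaneously delivers the free universality of part (b).

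The decisive step is a combinatorial formula for $\varphi(Q_{\Y}(f_n)^4)$. Expanding the fourth power and applying the free moment--cumulant formula, one writes $\varphi(Q_{\Y}(f_n)^4)$ as a sum over non-crossing partitions $\pi\in NC(4d)$ of the $4d$ tensor slots (grouped into four blocks of $d$ slots, one per factor), each partition being weighted by $\prod_{B\in\pi}\kappa_{|B|}(Y)$ and by the corresponding product of kernel values. Three structural constraints prune this sum drastically: free independence forces every block of $\pi$ to be monochromatic in the summation index; $\kappa_1(Y)=\varphi(Y)=0$ rules out singletons; and the vanishing of $f_n$ on diagonals forbids a block from meeting the same factor twice, so that blocks have size $2$, $3$ or $4$, contributing respectively $\kappa_2(Y)=1$, $\kappa_3(Y)=\varphi(Y^3)$ and $\kappa_4(Y)$. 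The purely pairing partitions reproduce exactly $\varphi(Q_{\bs{S}}(f_n)^4)$, the fourth moment on the Wigner chaos. Collecting the remaining partitions, and noting that blocks of odd size (necessarily of size $3$, and occurring in even number by a parity count of the slots) carry even powers of $\kappa_3(Y)$, I aim for an identity of the form
\begin{equation*}
d!^2\varphi(Q_{\Y}(f_n)^4)=d!^2\varphi(Q_{\bs{S}}(f_n)^4)+\kappa_4(Y)\,A_n+R_n ,
\end{equation*}
where $A_n\ge 0$ is a sum of squared diagonal contractions of $f_n$ and $R_n$ gathers the odd-cumulant contributions together with the higher powers of $\kappa_4(Y)$.

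The key lemma, and the main obstacle, is to show that this whole correction is non-negative under the hypothesis $\kappa_4(Y)\ge 0$, that is,
$$\varphi(Q_{\Y}(f_n)^4)\ \ge\ \varphi(Q_{\bs{S}}(f_n)^4)\qquad\text{for every admissible }f_n .$$
Here the rigidity of non-crossing partitions is essential: planarity severely restricts the admissible configurations carrying higher cumulants (already for $d=2$ no non-crossing pair of triples respects the factor and diagonal constraints), and this is the structural reason why, in contrast with the commutative case treated in \cite{NPPS}, the extra hypothesis $\varphi(Y^3)=0$ is \emph{not} required to neutralize the third-cumulant terms. Establishing the sign of $A_n$ and the control of the odd-cumulant part of $R_n$ through this combinatorics is the genuinely technical point.

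Granting the lemma, the conclusion follows by squeezing. The free fourth-moment inequality on the Wigner chaos \cite{KempNourdinPeccatiSpeicher} gives $d!^2\varphi(Q_{\bs{S}}(f_n)^4)\ge 2$, while the lemma gives $d!^2\varphi(Q_{\bs{S}}(f_n)^4)\le d!^2\varphi(Q_{\Y}(f_n)^4)$; hence the hypothesis $d!^2\varphi(Q_{\Y}(f_n)^4)\to 2$ pins down $d!^2\varphi(Q_{\bs{S}}(f_n)^4)\to 2$. By the Wigner-chaos Fourth Moment Theorem and its universality \cite{NourdinDeya,KempNourdinPeccatiSpeicher}, this yields $\sqrt{d!}\,Q_{\bs{S}}(f_n)\xrightarrow{\text{\rm Law}}\mathcal{S}(0,1)$ and therefore $\tau_n=\max_i\mathrm{Inf}_i(f_n)\to 0$; feeding in the easy implication (ii)$\Rightarrow$(i), this already establishes the free universality of part (b). Finally, with $\tau_n\to 0$ at hand, Theorem \ref{Multiinvariance2} applied with $p=1$ gives $|\varphi(Q_{\Y}(f_n)^m)-\varphi(Q_{\bs{S}}(f_n)^m)|=\mathcal{O}(\sqrt{\tau_n})\to 0$ for every $m$, so that $\sqrt{d!}\,Q_{\Y}(f_n)$ and $\sqrt{d!}\,Q_{\bs{S}}(f_n)$ share the same limiting moments; since the latter converges to $\mathcal{S}(0,1)$, so does the former, completing (i)$\Rightarrow$(ii).
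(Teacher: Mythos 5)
First, note that the paper does not actually prove Theorem \ref{superTeo2}: it is imported from \cite{NPPS}, where the argument runs along exactly the lines you propose. Your reduction is the right one, and every soft step in it is handled correctly: the easy implication via moment convergence; the squeeze of $d!^2\varphi(Q_{\bs S}(f_n)^4)$ between $2$ (the Wigner-chaos lower bound of \cite{KempNourdinPeccatiSpeicher}) and $d!^2\varphi(Q_{\Y}(f_n)^4)\to 2$; the passage to $\tau_n\to0$ via the semicircular Fourth Moment Theorem and universality of \cite{NourdinDeya}, which also delivers part (b) of Definition \ref{defnoncom}; and the return trip via Theorem \ref{Multiinvariance2} with $p=1$ (the uniform moment bounds it requires are automatic here, the $Y_i$ being identically distributed algebra elements).

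The gap is that the entire analytic content of the theorem is concentrated in the inequality $\varphi(Q_{\Y}(f_n)^4)\ge\varphi(Q_{\bs S}(f_n)^4)$, which you label a ``key lemma'' and explicitly leave unproved. Three things must actually be verified: that the coefficient $A_n$ of $\kappa_4(Y)$ is a sum of squares of contractions of $f_n$; that the higher powers of $\kappa_4(Y)$ collected in $R_n$ also carry non-negative coefficients; and that the third-cumulant contributions are harmless. On the last point, your parity observation (size-$3$ blocks occur in even number, hence $\kappa_3(Y)$ appears only to even powers) says nothing about the sign of the corresponding kernel coefficients, so it does not neutralize these terms; what actually happens in \cite{NPPS} is that for every $d\ge 2$ the non-crossing, monochromatic, off-diagonal constraints admit no partition containing an odd block at all --- which is precisely why, unlike Theorem \ref{superTeo1}, the free statement carries no hypothesis on $\varphi(Y^3)$ --- and you check this only for $d=2$. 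As written, your text is a correct reduction of Theorem \ref{superTeo2} to the fourth-moment formula of \cite{NPPS}, not an independent proof of it.
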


\begin{rmk}[Gamma and Free Poisson approximations]
In \cite{NPPS}, it was shown that, when $d$ is an even integer, any random variable satisfying {\bf (1)} and $\chi_4(X) \geq 0$, is universal and satisfies the Fourth Moment Theorem with respect to the Gamma approximation as well (see also \cite{NourdinPeccati2}). Similarly, in the free setting, every non-commutative random variable satisfying Assumption {\bf (2)} and with $\kappa_4(X) \geq 0$ is both universal and satisfies the Fourth Moment Theorem with respect to the free Poisson approximation, that can be considered as the free counterpart to the Gamma law (see also \cite{NourdinPeccati1}). Unfortunately, so far there is no result proving the equivalence between componentwise and joint convergence for Poisson limits, and hence our strategy cannot deal with Poisson approximations.\\
\end{rmk}

\subsection{Some examples}
As to the classical probability setting, Theorem \ref{superTeo1} supplies us with several examples of laws that satisfy the Fourth Moment Theorem and are universal for central convergence. Here are some examples.
\begin{enumerate}
\item Let $X_1, X_2$ be independent random variables satisfying Assumption {\bf (1)} and such that $\chi_4(X_1), \chi_4(X_2) \geq 0$. Then $Z = X_1 + X_2$ satisfies in turn Assumption {\bf (1)} and $\chi_4(Z) \geq 0$ (due to the additivity property of cumulants), and hence satisfies the Fourth Moment Theorem. As to multiplicative convolution, $W:= X_1 X_2$ satisfies Assumption {\bf (1)} as well. By virtue of the moment-cumulant formula, $\chi_4(W) =  \E[X_1^4]\E[X_2^4] - 3$ and hence, according to Theorem \ref{superTeo1}, for $W$ to satisfy the Fourth Moment Theorem it is sufficient that that at least one of the $X_i$'s satisfies $\chi_4(X_i)\geq 0$.  
\item Every random variable $X$, centered and with unit variance, whose law is infinitely divisible with respect to additive convolution, satisfies $\chi_4(X) = \E[X^4] - 3\geq 0$ (see for instance \cite[Proposition A1]{Arizmendi1bis}). Hence, if $X$ is infinitely divisible and satisfies Assumption {\bf (1)}, the Fourth Moment Theorem for homogeneous sums $Q_{\X}(f_n)$ holds at any order $d\geq 2$. The same necessary condition on the kurtosis has been exploited to study the non-classical infinite divisibility of power semicircular distributions in \cite{Arizmendi2}.

\item For $k\geq 1$, let $H_k(x)$ denote the $k$-th Hermite polynomial and let $N \sim \mathcal{N}(0,1)$. Then,
$$ \E[H_k(N)^4] = |\mathcal{P}_2^{\star}(k^{\otimes 4}) | \geq 3 \; $$
where $ \mathcal{P}_2^{\star}(k^{\otimes 4})$ denotes the set of pairing partitions $\sigma$ of $[4k]$ such  that  every block of $\sigma$ intersect each block of 
$$ k^{\otimes 4} := \{\{1,\dots,k\}, \{k+1,\dots,2k\},\dots,\{2k+1,\dots,3k\},\{3k+1,\dots,4k\}\}$$
in at most one element.  Since $\E[H_k(N)^3]=0$ if $k$ is odd,  for $X=H_k(N)$, Theorem \ref{superTeo1} applies: the techniques so far established do not allow us to infer that  the assumption on the third moment can be dropped.
\end{enumerate}
As to the non-commutative setting, Theorem \ref{superTeo2} establishes that the Fourth Moment Theorem (along with the universality phenomenon) applies, for instance, in the following cases:
\begin{enumerate}
\item
Every random variable $Y$ satisfying Assumption {\bf (2)} and whose law is infinitely divisible with respect to the additive free convolution, satisfies $\kappa_4(Y) = \varphi(Y^4) - 2\geq 0$ (see for instance \cite[Proposition A1]{Arizmendi1bis}). Hence, every  freely infinitely divisible law satisfies the Fourth Moment Theorem (and the universality) as to semicircular and free Poisson approximations of homogeneous sums, at any order $d\geq 2$. 

\item For $k\geq 1$, if $U_k(x)$ denotes the $k$-th Chebyshev polynomial (of the second kind) and $S \sim \mathcal{S}(0,1)$, then:
$$ \varphi[U_k(S)^4] = |\mathcal{NC}_2^{\star}(k^{\otimes 4}) | \geq 2 \; ,$$
where $\mathcal{NC}_2^{\star}(k^{\otimes 4})$ denotes the set of the non-crossing pairings $\sigma$ of the set $[4k]$, such that in each block of $\sigma$, there is at most one element of every block of the interval partition $k^{\otimes 4}$. Note that the universality of the law of $U_k(S)$ for semicircular (and free Poisson) approximations of homogeneous sums can be also established via the approach developed in \cite{Simone}.

\item Let $\mathcal{T}$ be a Tetilla-distributed random variable, namely $\mathcal{T} \stackrel{ \text { Law} }{=} \frac{1}{\sqrt{2}}(S_1 S_2 + S_2 S_1)$, where $S_1, S_2$ are freely independent standard semicircular random variables. Since $\kappa_4(\mathcal{T}) = \frac{1}{2} > 0$,  $\mathcal{T}$ satisfies both the Fourth Moment Theorem and the universality property for semicircular and free Poisson approximations of homogeneous sums, at any order $d\geq 2$ (see \cite{NourdinDeya2}).

\item Let $X \sim \mathcal{G}_q(0,1)$, with $\mathcal{G}_q(0,1)$ denoting the $q$-Gaussian distribution -- that we assume to be defined on an adequate non-commutative probability space $(\mathscr{A}_q, \varphi_q)$. Then, $\kappa_4(X)  = \varphi_q(X^4) - 2 = q$, and hence, if $q \in [0,1]$, $X$ satisfies the Fourth Moment Theorem and the law $\mathcal{G}_q(0,1)$ is universal at any order $d \geq 2$ for semicircular and free Poisson approximations of free homogeneous sums. See \cite[Theorem 3.1 and Proposition 3.2]{qbrownian} for the general Fourth Moment Theorem for integrals with respect to a $q$-Brownian motion of symmetric kernels, for non-negative values of $q$. Equivalently, the fourth moment and the universality phenomena for $X$ can be alternatively deduced from the fact that, for positive values of $q$, the $q$-Gaussian distribution is also freely infinitely divisible \cite{Lehner}.
\end{enumerate}

\section{Main results}

\subsection{Multidimensional Central Limit Theorem in the classical setting}

The next statement corresponds to \cite[Theorem 7.1]{NourdinPeccatiReinert}, where the authors have provided an explicit error bound for the distance in law between a vector of the type $$(Q_{\bs{X}}(f_n^{(1)}),\dots,Q_{\bs{X}}(f_n^{(m)}))$$ and its Wiener-chaos counterpart $(Q_{\bs{N}}(f_n^{(1)}),\dots,Q_{\bs{N}}(f_n^{(m)}))$. For the sake of completeness, it is worth to underline that a first attempt of extending Theorem \ref{invMossel} to the multidimensional case  has been carried out in \cite[Theorem 4.1]{Mossel2}, in the case one of the sequences lives on a discrete probability space.

\begin{thm}\label{MultiMossel}
Let $m,d\geq 1$. Let $\bs{X}=\{X_i\}_{i\geq 1}$ be a sequence of centered independent random variables, with unit variance, whose third moments are uniformly bounded (namely, such that there exists $\beta >0$ such that $\sup\limits_{i\geq 1}\E[|X_i|^{3}] < \beta$). For $j=1,\dots,m$, let $f_n^{(j)}:[n]^d \rightarrow \mathbb{R}$ be an admissible kernel according to Definition \ref{Admissible}. If $\bs{N}= \{N_i\}_{i\geq 1}$ denotes a sequence of i.i.d. standard Gaussian random variables,  for every thrice differentiable function $\psi:\mathbb{R}^m \rightarrow \mathbb{R}$, with $\|\psi^{\prime\prime \prime} \|_{\infty} < \infty$, there exists a constant $C=C(\beta, m, d, \psi)$ such that:
$$ \big| \E[\psi(Q_{\bs{X}}(f_n^{(1)}),\dots,Q_{\bs{X}}(f_n^{(m)}) )] -   \E[\psi(Q_{\bs{N}}(f_n^{(1)}),\dots,Q_{\bs{N}}(f_n^{(m)}) )]\big| \leq C \sqrt{\max_{j=1,\dots,m}\tau_n(f_n^{(j)})}. \\$$ 
\end{thm}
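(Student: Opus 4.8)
The plan is to run a multivariate Lindeberg (telescoping) replacement argument: exchange the coordinates $X_i$ for $N_i$ one at a time, and control each exchange by a third-order Taylor expansion of $\psi$. Because $X_i$ and $N_i$ share their first two moments, all contributions below order three will cancel, and the summability of the residual third-order terms will be governed precisely by the influence functions, producing the announced $\sqrt{\max_j\tau_n}$ rate.

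First I would introduce, for $i=0,1,\dots,n$, the hybrid sequence $\bs{W}^{(i)}=(N_1,\dots,N_i,X_{i+1},X_{i+2},\dots)$ and set $\Psi(\bs{W}):=\psi\big(Q_{\bs{W}}(f_n^{(1)}),\dots,Q_{\bs{W}}(f_n^{(m)})\big)$, so that $\bs{W}^{(0)}=\bs{X}$ and $\bs{W}^{(n)}=\bs{N}$. The quantity to estimate then telescopes as
\[
\E[\Psi(\bs{W}^{(0)})]-\E[\Psi(\bs{W}^{(n)})]=\sum_{i=1}^n\Big(\E[\Psi(\bs{W}^{(i-1)})]-\E[\Psi(\bs{W}^{(i)})]\Big),
\]
where the two sequences entering the $i$-th summand differ only in the $i$-th coordinate ($X_i$ versus $N_i$). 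For a single swap I would exploit that each admissible kernel vanishes on diagonals and is symmetric, so every homogeneous sum is affine in the $i$-th variable: $Q_{\bs{W}}(f_n^{(j)})=W_i\,D_i^{(j)}+R_i^{(j)}$, where $D_i^{(j)}$ and $R_i^{(j)}$ depend only on the coordinates distinct from $i$ and are thus independent of $W_i\in\{X_i,N_i\}$. Taylor-expanding $\psi$ to second order around $\bs{R}_i=(R_i^{(1)},\dots,R_i^{(m)})$ with Lagrange remainder and taking expectations, the constant, first- and second-order terms depend on $W_i$ only through $\E[W_i]=0$ and $\E[W_i^2]=1$, which agree for $X_i$ and $N_i$ and hence cancel in the difference. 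What survives is controlled by the two third-order remainders:
\[
\big|\E[\Psi(\bs{W}^{(i-1)})]-\E[\Psi(\bs{W}^{(i)})]\big|\le \frac{\|\psi'''\|_{\infty}}{6}\big(\E|X_i|^3+\E|N_i|^3\big)\,\E\big[\|\bs{D}_i\|_1^3\big],
\]
where $\bs{D}_i=(D_i^{(1)},\dots,D_i^{(m)})$ and $\|\cdot\|_1$ is the $\ell^1$-norm on $\mathbb{R}^m$.

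It then remains to estimate $\E[\|\bs{D}_i\|_1^3]\le m^2\sum_{j}\E[|D_i^{(j)}|^3]$ (by the power-mean inequality). Each $D_i^{(j)}$ is a degree-$(d-1)$ multilinear homogeneous sum in the coordinates other than $i$ (a mixture of Gaussians and copies of $X$), whose second moment equals $c_d\,\mathrm{Inf}_i(f_n^{(j)})$ for a constant $c_d$ depending only on $d$. The decisive input is a moment-comparison (hypercontractivity) estimate of Mossel--O'Donnell--Oleszkiewicz type: under the uniform third-moment bound $\beta$ (the Gaussian coordinates having all moments finite), a low-degree multilinear polynomial obeys $\|D_i^{(j)}\|_3\le C(d,\beta)\,\|D_i^{(j)}\|_2$, whence $\E[|D_i^{(j)}|^3]\le C'(d,\beta)\,\mathrm{Inf}_i(f_n^{(j)})^{3/2}$, uniformly in $i$, in $n$, and along the whole interpolation. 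Finally, estimating one of the three factors by $\tau_n(f_n^{(j)})$ and using the normalization,
\[
\sum_{i=1}^n\mathrm{Inf}_i(f_n^{(j)})^{3/2}\le\sqrt{\tau_n(f_n^{(j)})}\,\sum_{i=1}^n\mathrm{Inf}_i(f_n^{(j)})=\frac{1}{d!}\,\sqrt{\tau_n(f_n^{(j)})},
\]
since $\sum_{i=1}^n\mathrm{Inf}_i(f_n^{(j)})=\sum_{i_1,\dots,i_d}f_n^{(j)}(i_1,\dots,i_d)^2=1/d!$. Summing over $i$ and combining with the uniform bound $\E|X_i|^3\le\beta$ yields the claim with $C=C(\beta,m,d,\psi)$.

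The main obstacle is the moment-comparison step: securing $\|D_i^{(j)}\|_3\lesssim\|D_i^{(j)}\|_2$ from only a \emph{uniform bound on third moments}, rather than on all moments, and guaranteeing that its constant is uniform across the mixed Gaussian/$X$ ensembles arising in the telescoping (for $d=1$ the derivatives are deterministic and this issue disappears). Everything else — the affine decomposition, the cancellation of the first two moments, and the influence summation — is routine once this hypercontractive control is in place.
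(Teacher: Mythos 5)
The paper does not prove Theorem~\ref{MultiMossel} at all: it is imported verbatim from \cite{NourdinPeccatiReinert} (Theorem 7.1 there), and your Lindeberg replacement argument --- telescoping coordinate swaps, affine decomposition $Q=W_iD_i^{(j)}+R_i^{(j)}$, cancellation of the first two moments, and summation of $\mathrm{Inf}_i^{3/2}$ --- is essentially the proof given in that reference (itself modelled on \cite{Mossel}). The one step you flag as an obstacle is not one: the $(2,3)$-hypercontractivity of a centered unit-variance variable requires only a bound on its third absolute moment (this is the standard Mossel--O'Donnell--Oleszkiewicz hypercontractivity lemma, with constant depending only on $\|X\|_3/\|X\|_2$, hence uniform over the mixed Gaussian/$X$ ensembles), which is exactly why the theorem's hypothesis is the uniform third-moment bound $\beta$ rather than a bound on all moments.
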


\begin{rmk}
We stress that, due to our normalization assumption on the kernels $f_n^{(j)}$, $\sqrt{\max\limits_{j=1,\dots,m}\tau_n(f_n^{(j)})} \leq 1$.\end{rmk}

The main result of the present subsection is a multidimensional version of Theorem \ref{superTeo1}, stated via Theorem \ref{ComponentJoint}: the proof we will provide 
use the findings of  \cite[Proposition 2]{PeccatiTudor} (stated in Theorem \ref{TeoPeccatiTudor}), where it is shown that for vectors of the type $(Q_{\bs{N}}(f_n^{(1)}),\dots,Q_{\bs{N}}(f_n^{(m)}) )$, joint convergence towards the multidimensional normal distribution is equivalent to componentwise central convergence, as summarized in the next statement. Note that the original statement does not concern exclusively Gaussian homogeneous sums, but deals with vectors of multiple Wiener integrals of symmetric functions in full generality. 

\begin{thm}\label{TeoPeccatiTudor}
For $d\geq 2$ and $m\geq 1$, assume that $C=(C_{i,j})_{i,j=1,\dots,m}$ is a real valued, positive definite, symmetric matrix. For every $j=1,\dots,m$, let $Q_{\bs{X}}(f_n^{(j)})$ be a sequence of homogeneous sums of degree $d$, with $f_n^{(j)}:[n]^d \rightarrow \mathbb{R}$ symmetric kernel, vanishing on diagonals, such that:
$$ \lim_{n \rightarrow \infty}\E[ Q_{\bs{X}}(f_n^{(j)}) Q_{\bs{X}}(f_n^{(i)})] = C_{i,j}\quad \forall \, i,j=1,\dots,m.$$
Then, the following statements are equivalent as $n\rightarrow \infty$:
\begin{itemize}
\item[(i)] $Q_{\bs{N}}(f_n^{(j)}) \stackrel{\text{ Law }}{ \longrightarrow} \mathcal{N}(0,C_{j,j})$ for every $j=1,\dots,m$;
\item[(ii)] $(Q_{\bs{N}}(f_n^{(1)}),\dots,Q_{\bs{N}}(f_n^{(m)}) ) \stackrel{\text{ Law }}{ \longrightarrow} \mathcal{N}(0, C)$, with $\mathcal{N}(0,C)$ denoting the $m$-dimensional Gaussian distribution with covariance matrix given by $C$.\\
\end{itemize}
\end{thm}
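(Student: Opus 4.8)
\section*{Proof proposal}

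The implication (ii)$\Rightarrow$(i) is immediate, since the marginals of a weakly convergent sequence of random vectors converge to the corresponding marginals of the limit. For the converse (i)$\Rightarrow$(ii) I would begin with a normalization remark. Because each $f_n^{(j)}$ is symmetric and vanishes on diagonals, a direct computation gives $\E[Q_{\bs{X}}(f_n^{(i)})Q_{\bs{X}}(f_n^{(j)})]=d!\,\langle f_n^{(i)},f_n^{(j)}\rangle$, where $\langle\cdot,\cdot\rangle$ is the $\ell^2([n]^d)$ inner product; this value depends only on the kernels and not on the underlying standardized i.i.d.\ sequence. In particular the covariance asymptotics carry over verbatim to the Gaussian sums, so that $\E[Q_{\bs{N}}(f_n^{(i)})Q_{\bs{N}}(f_n^{(j)})]\to C_{i,j}$.

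By the Cramér--Wold device, (ii) is equivalent to proving $\sum_{j=1}^m\lambda_j Q_{\bs{N}}(f_n^{(j)})\xrightarrow{\text{Law}}\mathcal{N}(0,\lambda^{\top}C\lambda)$ for every fixed $\lambda\in\mathbb{R}^m$. The decisive point is that all kernels share the same degree $d$, so this linear combination is itself a single Gaussian homogeneous sum, namely $\sum_j\lambda_j Q_{\bs{N}}(f_n^{(j)})=Q_{\bs{N}}(g_n)$ with $g_n:=\sum_j\lambda_j f_n^{(j)}$ a symmetric kernel vanishing on diagonals. Its variance equals $d!\,\|g_n\|^2=\sum_{i,j}\lambda_i\lambda_j\,\E[Q_{\bs{N}}(f_n^{(i)})Q_{\bs{N}}(f_n^{(j)})]\to\lambda^{\top}C\lambda$. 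Thus, by the one-dimensional Fourth Moment Theorem for the Wiener chaos (Theorem \ref{superTeo1} applied to the Gaussian law, in its classical contraction form \cite{NualartPeccati,NourdinPeccatilibro}), it remains only to check that the fourth cumulant of $Q_{\bs{N}}(g_n)$ tends to $0$.

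I would settle this last point through the contraction calculus. Since $Q_{\bs{N}}(h)$ belongs to the $d$-th Wiener chaos, its fourth cumulant is a non-negative linear combination of the squared contraction norms $\|h\otimes_r h\|^2$, $r=1,\dots,d-1$ (here $a\otimes_r b$ denotes the kernel obtained by summing the product $a\,b$ over $r$ shared indices). Consequently, hypothesis (i)---the Gaussian convergence of each component, whose variance already converges to $C_{j,j}$---is equivalent to $\|f_n^{(j)}\otimes_r f_n^{(j)}\|\to 0$ for every $j$ and every $r=1,\dots,d-1$. Expanding $g_n\otimes_r g_n=\sum_{i,j}\lambda_i\lambda_j(f_n^{(i)}\otimes_r f_n^{(j)})$ by bilinearity and invoking the elementary Fubini identity $\|f\otimes_r g\|^2=\langle f\otimes_{d-r}f,\,g\otimes_{d-r}g\rangle$, Cauchy--Schwarz yields the cross bound $\|f_n^{(i)}\otimes_r f_n^{(j)}\|\le\|f_n^{(i)}\otimes_{d-r}f_n^{(i)}\|^{1/2}\|f_n^{(j)}\otimes_{d-r}f_n^{(j)}\|^{1/2}$. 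Since $d-r$ again ranges over $\{1,\dots,d-1\}$, every term on the right tends to $0$ by (i); hence $\|g_n\otimes_r g_n\|\to 0$ for all $r$, the fourth cumulant of $Q_{\bs{N}}(g_n)$ vanishes asymptotically, and the argument closes.

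The main obstacle is exactly the transfer of contraction estimates from the individual kernels to their mixtures: the componentwise hypothesis controls only the \emph{self}-contractions $f_n^{(j)}\otimes_r f_n^{(j)}$, whereas the fourth cumulant of $Q_{\bs{N}}(g_n)$ involves all the \emph{mixed} contractions $f_n^{(i)}\otimes_r f_n^{(j)}$. The identity exchanging $\otimes_r$ with $\otimes_{d-r}$ is precisely what bridges this gap and constitutes the analytic heart of the argument; the remaining ingredients---Cramér--Wold, the kernel-only dependence of second moments, and the one-dimensional Fourth Moment Theorem---are either routine or already at our disposal.
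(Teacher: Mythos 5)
The paper does not actually prove this statement: Theorem \ref{TeoPeccatiTudor} is imported verbatim as \cite[Proposition 2]{PeccatiTudor}, so there is no in-paper argument to compare against. Your proof is correct, and it is essentially the standard contraction-based proof of the Peccati--Tudor criterion (the route taken in \cite{NourdinPeccatilibro}, as opposed to the original martingale/Dambis--Dubins--Schwarz argument of \cite{PeccatiTudor}), specialized to the situation at hand. All the steps check out: the identity $\E[Q_{\bs{X}}(f)Q_{\bs{X}}(g)]=d!\,\langle f,g\rangle$ for symmetric kernels vanishing on diagonals indeed depends only on the kernels; the Cram\'er--Wold reduction is legitimate precisely because all $m$ kernels have the same degree $d$, so that $\sum_j\lambda_jQ_{\bs{N}}(f_n^{(j)})=Q_{\bs{N}}(g_n)$ is again a single element of the $d$-th Wiener chaos and the one-dimensional Nualart--Peccati theorem (in contraction form, with limiting variance $\lambda^{\top}C\lambda>0$ by positive definiteness of $C$, and likewise $C_{j,j}>0$ for the componentwise equivalence with vanishing self-contractions) applies; and the duality $\|f\otimes_r g\|^2=\langle f\otimes_{d-r}f,\,g\otimes_{d-r}g\rangle$ together with Cauchy--Schwarz correctly reduces the mixed contractions to the self-contractions controlled by hypothesis (i). The one point worth flagging is that your argument covers exactly the equal-degree case stated here; the general Peccati--Tudor theorem allows components of different chaos orders, where a linear combination is no longer a single multiple integral and the Cram\'er--Wold shortcut must be replaced by a genuinely multivariate argument. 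Since the paper only needs the equal-degree version, your proof suffices for its purposes.
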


Combining  Theorem \ref{superTeo1}, Theorem \ref{MultiMossel} and Theorem \ref{TeoPeccatiTudor}, it is possible to conclude that  the equivalence between joint and componentwise convergence for normal approximations of random vectors $(Q_{\bs{X}}(f_n^{(1)}),\dots,Q_{\bs{X}}(f_n^{(m)}) )$  always holds true under the assumptions $\E[X^3]=0$ and $\E[X^4] \geq 3$, as made precise in the following statement.

\begin{thm}\label{ComponentJoint}
Fix $m\geq 1$ and $d\geq 2$. Let $\bs{X} = \{X_{i}\}_{i\geq 1}$ be a sequence of independent copies of a random variable $X$ verifying Assumption {\bf (1)} and $\E[X^4]\geq 3$. For every $j=1,\dots,m$, let $Q_{\bs{X}}(f_n^{(j)})$ be a sequence of homogeneous sums of degree $d$, with $f_n^{(j)}:[n]^d \rightarrow \mathbb{R}$ admissible kernel, such that:
$$ \lim_{n \rightarrow \infty}\E[ Q_{\bs{X}}(f_n^{(j)}) Q_{\bs{X}}(f_n^{(i)})] = C_{i,j} \quad \forall\, i,j=1,\dots,m \,,$$
where $C= (C_{i,j})_{i,j=1,\dots,m}$ is a real valued, positive definite symmetric matrix. The following statements are equivalent as $n\rightarrow \infty$:
\begin{itemize}
\item[(i)] $Q_{\bs{X}}(f_n^{(j)}) \stackrel{\text{ Law }}{ \longrightarrow} \mathcal{N}(0,C_{j,j})$ for every $j=1,\dots,m$;
\item[(ii)] $(Q_{\bs{X}}(f_n^{(1)}),\dots,Q_{\bs{X}}(f_n^{(m)}) ) \stackrel{\text{ Law }}{ \longrightarrow} \mathcal{N}(0, C)$,
with $\mathcal{N}(0,C)$ denoting the $m$-dimensional Gaussian distribution with covariance matrix given by $C$.
\end{itemize}
\end{thm}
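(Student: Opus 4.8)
The implication (ii) $\Rightarrow$ (i) is immediate, since joint convergence of a random vector forces the convergence of each of its marginals; thus the substance of the statement lies in proving (i) $\Rightarrow$ (ii). The backbone of my argument is the observation that, because each $f_n^{(j)}$ is admissible and hence vanishes on diagonals, the covariance of two homogeneous sums of the same degree depends only on the second moments of the underlying sequence. Indeed, expanding $\E[Q_{\bs{X}}(f_n^{(i)})Q_{\bs{X}}(f_n^{(j)})]$ and invoking centering, independence, unit variance and the vanishing on diagonals, only those index configurations in which the two $d$-tuples coincide as sets survive, each carrying a factor $\E[X^2]=1$. The identical computation holds verbatim for a standard Gaussian sequence $\bs{N}$, so that
\begin{equation}
\E[Q_{\bs{X}}(f_n^{(i)})Q_{\bs{X}}(f_n^{(j)})]=\E[Q_{\bs{N}}(f_n^{(i)})Q_{\bs{N}}(f_n^{(j)})]\qquad\text{for all }n,i,j.\label{covid}
\end{equation}
In particular the hypothesis $\lim_n\E[Q_{\bs{X}}(f_n^{(i)})Q_{\bs{X}}(f_n^{(j)})]=C_{i,j}$ transfers without change to the Gaussian sums, and (taking $i=j$, so that $C_{j,j}=\E[Q_{\bs{X}}(f_n^{(j)})^2]=1$) is consistent with the admissibility normalization.

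Assume now (i). Since $X$ satisfies Assumption \textbf{(1)} with $\E[X^4]\geq 3$, Theorem \ref{superTeo1} guarantees that $X$ is universal at the order $d$; applying Definition \ref{defcom}(b) to the componentwise convergence $Q_{\bs{X}}(f_n^{(j)})\xrightarrow{\text{Law}}\mathcal{N}(0,1)$ gives $\tau_n(f_n^{(j)})\to 0$ for each $j=1,\dots,m$, hence $\max_{j}\tau_n(f_n^{(j)})\to 0$. With this decay in hand I will transport the componentwise convergence from $\bs{X}$ to $\bs{N}$ by means of Theorem \ref{MultiMossel} in the one-dimensional case: for every thrice differentiable $\phi:\mathbb{R}\to\mathbb{R}$ with $\|\phi'''\|_\infty<\infty$ one has $|\E[\phi(Q_{\bs{X}}(f_n^{(j)}))]-\E[\phi(Q_{\bs{N}}(f_n^{(j)}))]|\leq C\sqrt{\tau_n(f_n^{(j)})}\to 0$, and since $Q_{\bs{X}}(f_n^{(j)})$ converges to $\mathcal{N}(0,1)$ while the class of such test functions is convergence-determining, we deduce $Q_{\bs{N}}(f_n^{(j)})\xrightarrow{\text{Law}}\mathcal{N}(0,1)$ for every $j$.

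At this stage the Gaussian sums enjoy both the componentwise convergence just obtained and, via \eqref{covid}, the covariance convergence $\lim_n\E[Q_{\bs{N}}(f_n^{(i)})Q_{\bs{N}}(f_n^{(j)})]=C_{i,j}$; these are precisely the hypotheses of Theorem \ref{TeoPeccatiTudor}, which upgrades them to the joint convergence $(Q_{\bs{N}}(f_n^{(1)}),\dots,Q_{\bs{N}}(f_n^{(m)}))\xrightarrow{\text{Law}}\mathcal{N}(0,C)$. The final step is to push this joint convergence back to the $\bs{X}$-side through the multidimensional invariance principle: for every thrice differentiable $\psi:\mathbb{R}^m\to\mathbb{R}$ with bounded third derivatives, Theorem \ref{MultiMossel} yields
\begin{equation*}
\big|\E[\psi(Q_{\bs{X}}(f_n^{(1)}),\dots,Q_{\bs{X}}(f_n^{(m)}))]-\E[\psi(Q_{\bs{N}}(f_n^{(1)}),\dots,Q_{\bs{N}}(f_n^{(m)}))]\big|\leq C\sqrt{\max_{j}\tau_n(f_n^{(j)})}\to 0,
\end{equation*}
so that the smooth test-function expectations of the $\bs{X}$-vector converge to the corresponding Gaussian expectations. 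As this class of $\psi$ is convergence-determining, we obtain $(Q_{\bs{X}}(f_n^{(1)}),\dots,Q_{\bs{X}}(f_n^{(m)}))\xrightarrow{\text{Law}}\mathcal{N}(0,C)$, which is (ii).

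The principal obstacle, and the sole point where the hypothesis $\E[X^4]\geq 3$ is genuinely used, is the passage from componentwise convergence to the decay $\max_j\tau_n(f_n^{(j)})\to 0$: this is exactly the universality statement of Theorem \ref{superTeo1}, without which neither transfer (from $\bs{X}$ to $\bs{N}$ nor back) could be activated. Everything else is bookkeeping around the Gaussian core supplied by Theorem \ref{TeoPeccatiTudor}; one should only take care that the covariance identity \eqref{covid} holds for every finite $n$, and not merely in the limit, so that the hypotheses of Peccati--Tudor are available before letting $n\to\infty$, and that the smooth-function form of the invariance principle (Theorem \ref{MultiMossel}) is used throughout, so that the mere finiteness of $\E[|X|^{4+\epsilon}]$ in Assumption \textbf{(1)} suffices.
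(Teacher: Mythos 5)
Your proposal is correct and follows essentially the same route as the paper's own proof: invoke Theorem \ref{superTeo1} to get universality and hence $\tau_n(f_n^{(j)})\to 0$, transfer the componentwise convergence to the Gaussian sums, apply Theorem \ref{TeoPeccatiTudor} to upgrade to joint convergence, and come back to $\bs{X}$ via Theorem \ref{MultiMossel} using the matching covariances. You merely spell out some steps the paper leaves implicit (the exact-covariance identity for every finite $n$, the observation that $C_{j,j}=1$ so Definition \ref{defcom}(b) applies verbatim, and the convergence-determining nature of the smooth test-function class), which is fine.
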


\begin{proof}
It is sufficient to prove  that $(i) \Rightarrow (ii)$, since the reverse implication is obvious.\\
Assume that $(i)$ occurs. Under the assumption $E[X^4] \geq 3$ and by virtue of Theorem \ref{superTeo1}, $X$ satisfies the Fourth Moment Theorem and its law is universal at the order $d$ for normal approximations of homogeneous sums of degree $d$, implying, in particular, that:
$$Q_{\bs{N}}(f_n^{(j)}) \stackrel{\text{ Law }}{ \longrightarrow} \mathcal{N}(0,C_{j,j}) \quad \text{for every } j=1,\dots,m ,$$
for a sequence $\bs{N}$ of independent standard Gaussian random variables.  Besides, for every $j=1,\dots,m$, $\tau_n^{(j)} = \max\limits_{i=1,\dots,n}\mathrm{Inf}_i(f_n^{(j)}) \longrightarrow 0$ as $n \rightarrow \infty$. Since 
$$\E[Q_{\bs{X}}(f_n^{(j)}) Q_{\bs{X}}(f_n^{(i)})] = \E[Q_{\bs{N}}(f_n^{(j)}) Q_{\bs{N}}(f_n^{(i)})] \; \forall i,j=1,\dots,m,$$
by virtue of Theorem \ref{MultiMossel} the random vectors $(Q_{\bs{N}}(f_n^{(1)}),\dots,Q_{\bs{N}}(f_n^{(m)}) )$ and \linebreak $(Q_{\bs{X}}(f_n^{(1)}),\dots,Q_{\bs{X}}(f_n^{(m)}) )$ are asymptotically close in distribution. Finally, the conclusion follows by applying Theorem \ref{TeoPeccatiTudor}.
\end{proof}

\subsection{Multidimensional CLT in the free setting}

The subsequent statements summarize \cite[Theorem 1.3]{NouSpeiPec} and  \cite[Theorem 1.3]{NourdinDeya}, where the free counterpart to  \cite[Proposition 2]{PeccatiTudor} and to Theorem \ref{invMossel} was achieved  respectively. Even if the original statement deals with vectors of Wigner stochastic integrals in full generality, the statement here is recalled only for semicircular homogeneous sums.

\begin{thm}\label{Equi_Free}
For $d \geq 2$ and $m\geq 1$, let $f_n^{(j)}:[n]^d \rightarrow \mathbb{R}$ be a mirror symmetric function for every $j=1,\dots,m$. If $\mathbf{S} = \{S_i\}_{i \geq 1}$ denotes a sequence of freely independent standard semicircular random variables, let $C = (C_{i,j})_{i,j=1,\dots,m}$ be a real-valued, positive definite symmetric matrix, such that for $i,j=1,\dots,m$,
$$ \lim_{n\rightarrow \infty} \varphi\big(Q_{\mathbf{S}}(f_n^{(i)}) Q_{\mathbf{S}}(f_n^{(j)}) \big) = C_{i,j}.$$
If $(s_1,\dots,s_m)$ denotes a semicircular system with covariance determined by $C$, the following statements are equivalent as $n\rightarrow \infty$:
\begin{itemize}
\item[(i)] $Q_{\mathbf{S}}(f_n^{(j)}) \stackrel{ \text{ Law }}{\longrightarrow } s_{j}$;
\item[(ii)] $(Q_{\mathbf{S}}(f_n^{(1)}),\dots,Q_{\mathbf{S}}(f_n^{(m)})) \stackrel{ \text{ Law }}{\longrightarrow } (s_1,\dots,s_m).$\\
\end{itemize}
\end{thm}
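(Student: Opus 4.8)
The implication $(ii)\Rightarrow(i)$ is immediate, since the marginals of a vector converging in law converge in law; so the plan is to prove $(i)\Rightarrow(ii)$ by the method of moments, exploiting the combinatorial structure of semicircular homogeneous sums (i.e. Wigner integrals). The starting point is that the joint law of a semicircular system $(s_1,\dots,s_m)$ with covariance $C$ is entirely determined by $C$ through the free Wick formula: every mixed moment satisfies $\varphi(s_{j_1}\cdots s_{j_k})=\sum_{\pi\in\mathcal{NC}_2(k)}\prod_{\{a,b\}\in\pi}C_{j_a,j_b}$, where $\mathcal{NC}_2(k)$ is the set of non-crossing pair partitions of $[k]$ (in particular this vanishes for odd $k$). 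Hence, by the free moment-convergence criterion, $(ii)$ is equivalent to showing that for every $k\geq1$ and all $j_1,\dots,j_k\in\{1,\dots,m\}$ one has $\varphi\big(Q_{\mathbf{S}}(f_n^{(j_1)})\cdots Q_{\mathbf{S}}(f_n^{(j_k)})\big)\to\sum_{\pi\in\mathcal{NC}_2(k)}\prod_{\{a,b\}\in\pi}C_{j_a,j_b}$.

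The second step is to expand these mixed moments. Iterating the Wigner product formula $I_p(f)\,I_q(g)=\sum_{r}I_{p+q-2r}(f\frown_r g)$, where $f\frown_r g$ denotes the nested contraction of the last $r$ arguments of $f$ against the first $r$ arguments of $g$, each moment becomes a finite sum indexed by the pairwise contractions of the $k$ kernels. Only the fully contracted terms survive as scalars, and these are organized by non-crossing pairings; the term attached to a pairing $\pi$ is an iterated contraction of the kernels whose size is controlled by the contraction norms $\|f_n^{(i)}\frown_r f_n^{(j)}\|$ with $1\leq r\leq d-1$, while its leading part is exactly the product of inner products $\langle f_n^{(j_a)},f_n^{(j_b)}\rangle$ appearing in the covariances. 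By hypothesis these inner products converge to the entries $C_{j_a,j_b}$, so the surviving pairing terms already converge to the correct limit; what remains is to kill all the genuinely crossing, non-pairing error terms.

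The crux --- and the main obstacle --- is to show that componentwise convergence forces every such error term to vanish. By the free Fourth Moment Theorem (Theorem \ref{superTeo2}, which applies to $S$ since $\varphi(S^4)=2$, together with the contraction analysis of \cite{KempNourdinPeccatiSpeicher}), hypothesis $(i)$ is equivalent to the vanishing of all \emph{self}-contractions, $\|f_n^{(j)}\frown_r f_n^{(j)}\|\to0$ for each $j$ and each $1\leq r\leq d-1$. The difficulty is to upgrade this to the vanishing of the \emph{cross}-contractions $\|f_n^{(i)}\frown_r f_n^{(j)}\|$ with $i\neq j$. I would do this by a Cauchy--Schwarz argument: writing $\|f_n^{(i)}\frown_r f_n^{(j)}\|^2$ as a sum over the free indices and reorganizing the contracted ones, the quantity is dominated by a bound of the form $\|f_n^{(i)}\frown_s f_n^{(i)}\|^{1/2}\,\|f_n^{(j)}\frown_s f_n^{(j)}\|^{1/2}$, which tends to $0$ by the previous step.

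Once all self- and cross-contractions are shown to vanish, the error terms in the moment expansion disappear, every mixed moment converges to the corresponding free Wick value $\sum_{\pi\in\mathcal{NC}_2(k)}\prod_{\{a,b\}\in\pi}C_{j_a,j_b}$, and $(ii)$ follows. I expect the bookkeeping of the iterated product formula (identifying precisely which terms are scalar and matching them to non-crossing pairings) and the Cauchy--Schwarz bound relating cross- to self-contractions to be the two most delicate points; the covariance hypothesis enters only to fix the limiting values of the surviving pairing terms.
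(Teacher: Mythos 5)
Your proposal is essentially correct, but note that the paper itself gives no proof of this statement: it is recalled verbatim from \cite[Theorem 1.3]{NouSpeiPec} (restricted to homogeneous sums), and your outline reproduces the strategy of that cited proof --- method of moments via the iterated Wigner product formula, reduction of hypothesis (i) to the vanishing of all non-trivial self-contractions through the fourth moment theorem of \cite{KempNourdinPeccatiSpeicher}, and the upgrade to cross-contractions. The one step you flag as delicate is in fact clean: it rests on the exact identity $\bigl\| f\stackrel{r}{\frown} g\bigr\|^{2}=\bigl\langle f\stackrel{d-r}{\frown} f,\; g\stackrel{d-r}{\frown} g\bigr\rangle$, to which Cauchy--Schwarz applies directly, so your claimed bound holds with $s=d-r$.
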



For $d\geq 2$, the combination between Theorem \ref{Multiinvariance2}, Theorem \ref{superTeo2} and  Theorem \ref{Equi_Free} allows us to prove that Theorem \ref{Equi_Free} itself can be extended to all random variables with non-negative free kurtosis, providing therefore the free counterpart to Theorem \ref{ComponentJoint}. 

\begin{thm}\label{ComponentJointFree}
Fix $m\geq 1$ and $d\geq 2$. Let $\bs{Y} = \{Y_{i}\}_{i\geq 1}$ be a sequence of freely independent copies of a random variable $Y$ verifying Assumption {\bf (1)} and such that $\kappa_4(Y)= \varphi(Y^4)- 2 \geq 0$. For every $j=1,\dots,m$, let $Q_{\bs{Y}}(f_n^{(j)})$ be a sequence of homogeneous sums of degree $d$, with $f_n^{(j)}:[n]^d \rightarrow \mathbb{R}$ symmetric, vanishing on diagonals kernels such that: 
$$ \lim_{n \rightarrow \infty}\varphi\big( Q_{\bs{Y}}(f_n^{(j)}) Q_{\bs{Y}}(f_n^{(i)})\big) = C_{i,j} \quad \forall i, j=1,\dots,m. $$
If $C=(C_{i,j})_{i,j=1,\dots,m}$ is a real-valued, positive definite symmetric matrix, and $(s_1,\dots,s_m)$ denotes a semicircular system with covariance determined by $C$, the following statements are equivalent as $n\rightarrow \infty$:
\begin{itemize}
\item[(i)] $Q_{\bs{Y}}(f_n^{(j)}) \stackrel{\text{ Law }}{ \longrightarrow} s_j$ for every $j=1,\dots,m$;
\item[(ii)] $(Q_{\bs{Y}}(f_n^{(1)}),\dots,Q_{\bs{Y}}(f_n^{(m)}) ) \stackrel{\text{ Law }}{ \longrightarrow} (s_1,\dots,s_m)$.
\end{itemize}
\end{thm}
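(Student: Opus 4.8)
The plan is to follow the exact same three-ingredient strategy used in the proof of Theorem~\ref{ComponentJoint}, simply replacing each commutative tool by its free analogue, all of which have already been assembled earlier in the excerpt. As in the classical case, only the implication $(i)\Rightarrow(ii)$ requires an argument, since $(ii)\Rightarrow(i)$ is immediate from the fact that joint convergence forces convergence of each marginal. So I would assume $(i)$: for each $j$, $Q_{\bs{Y}}(f_n^{(j)})\xrightarrow{\text{Law}} s_j$, where $s_j$ is semicircular with variance $C_{j,j}$.

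First I would transfer the marginal convergence from $\bs{Y}$ to the semicircular reference sequence $\bs{S}$. Since $Y$ satisfies Assumption~\textbf{(2)} with $\kappa_4(Y)\geq 0$, Theorem~\ref{superTeo2} applies and tells us two things at once: $Y$ satisfies the free Fourth Moment Theorem and $Y$ is free universal at the order $d$. The marginal convergence of $Q_{\bs{Y}}(f_n^{(j)})$ to a semicircular limit, together with free universality, yields $\tau_n^{(j)}=\max_{i=1,\dots,n}\mathrm{Inf}_i(f_n^{(j)})\to 0$ for every $j=1,\dots,m$. Free universality moreover gives that $\sqrt{d!}\,Q_{\bs{S}}(f_n^{(j)})\xrightarrow{\text{Law}}\mathcal{S}(0,1)$, i.e. after accounting for the normalization, $Q_{\bs{S}}(f_n^{(j)})$ converges to the same semicircular marginal $s_j$ as $Q_{\bs{Y}}(f_n^{(j)})$.

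Next I would pass from componentwise to joint convergence on the semicircular side using Theorem~\ref{Equi_Free}. To invoke it I first need the covariances of the $\bs{S}$-sums to converge to the prescribed matrix $C$. Because $f_n^{(j)}$ are real symmetric kernels vanishing on diagonals, the mixed second moments $\varphi(Q_{\bs{S}}(f_n^{(i)})Q_{\bs{S}}(f_n^{(j)}))$ depend only on the kernels (being a contraction-type pairing that is identical for any centered, unit-variance freely independent family, and in particular matches the $\bs{Y}$-covariances); hence $\varphi(Q_{\bs{S}}(f_n^{(i)})Q_{\bs{S}}(f_n^{(j)}))\to C_{i,j}$. With the covariance hypothesis verified and the marginal convergences $Q_{\bs{S}}(f_n^{(j)})\to s_j$ in hand, Theorem~\ref{Equi_Free} upgrades these to joint convergence $(Q_{\bs{S}}(f_n^{(1)}),\dots,Q_{\bs{S}}(f_n^{(m)}))\xrightarrow{\text{Law}}(s_1,\dots,s_m)$.

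Finally I would transfer this joint convergence back from $\bs{S}$ to $\bs{Y}$ via the multidimensional free invariance principle, Theorem~\ref{Multiinvariance2}. Convergence in distribution for a vector of self-adjoint elements in a tracial setting is equivalent to convergence of all mixed $\ast$-moments, i.e. of all traces $\varphi(\bs{Q}_n(\cdot)^{\bs{m}_1}\cdots\bs{Q}_n(\cdot)^{\bs{m}_k})$. Theorem~\ref{Multiinvariance2} bounds the difference between the $\bs{Y}$-moments and the $\bs{S}$-moments by $\mathcal{O}(\max_j(\tau_n^{(j)})^{1/2})$, which tends to $0$ by the first step; therefore the two vectors are asymptotically close in the sense of moments, and the joint semicircular limit for $\bs{S}$ carries over verbatim to $\bs{Y}$, giving $(ii)$. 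The one point demanding care—the analogue of the step in Theorem~\ref{ComponentJoint} where moment convergence suffices to conclude convergence in law—is the passage from joint moment convergence to joint convergence in distribution: this is legitimate because the limiting semicircular system is compactly supported and hence moment-determinate, so its law is uniquely pinned down by its moments. I expect this determinacy/method-of-moments justification, rather than any of the transfer estimates, to be the only genuinely delicate ingredient.
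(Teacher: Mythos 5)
Your proposal is correct and follows essentially the same route as the paper: Theorem~\ref{superTeo2} to obtain the free Fourth Moment Theorem and free universality (hence $\tau_n^{(j)}\to 0$ and the componentwise semicircular convergence of the $\bs{S}$-sums), Theorem~\ref{Equi_Free} to upgrade to joint convergence on the semicircular side, and Theorem~\ref{Multiinvariance2} to transfer the joint limit back to $\bs{Y}$; the paper merely states the last two steps in the opposite order. Your closing worry about moment-determinacy is harmless but superfluous, since in the non-commutative setting convergence in distribution of a vector is by definition convergence of all joint moments.
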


\begin{proof}
It is sufficient to prove  that $(i)$ $\Rightarrow$ $(ii)$, since the reverse implication is obvious.\\
Assume that $(i)$ occurs. Under the assumption $\varphi(Y^4) \geq 2$ and by virtue of Theorem \ref{superTeo2}, $Y$ satisfies the Fourth Moment Theorem and its law is universal for semicircular approximations of homogeneous sums, at the given order $d$. In particular one has $Q_{\SSw}(f_n^{(j)}) \stackrel{\text{ Law }}{ \longrightarrow} s_j$ for every $j=1,\dots,m$; besides, $\tau_n^{(j)} = \max\limits_{i=1,\dots,n}\mathrm{Inf}_i(f_n^{(j)}) \longrightarrow 0$ for every $j=1,\dots,m$. Finally, since 
$$\varphi\big(Q_{\bs{Y}}(f_n^{(j)}) Q_{\bs{Y}}(f_n^{(i)})\big) = \varphi\big(Q_{\bs{S}}(f_n^{(j)}) Q_{\bs{S}}(f_n^{(i)})\big) \qquad \forall i, j =1,\dots,m\,,$$
by virtue of Theorem \ref{Multiinvariance2} it follows that the vectors $(Q_{\bs{S}}(f_n^{(1)}),\dots,Q_{\bs{S}}(f_n^{(m)}) )$ and \linebreak $(Q_{\bs{Y}}(f_n^{(1)}),\dots,Q_{\bs{Y}}(f_n^{(m)}) )$ are asymptotically close in distribution: hence the conclusion follows by Theorem \ref{Equi_Free}.\\
\end{proof}

The last statement that we need to recall is \cite[Theorem 1.6]{NouSpeiPec}, where the authors established the following \textit{transfer principle} for the multidimensional CLT between Wiener and Wigner chaos, here formulated only for homogeneous sums.

\begin{thm}\label{PrimoTransfer}
Let $d \geq 1$ and $m\geq 1$ be fixed integers, and let $C = (C_{i,j})_{i,j=1,\dots,m}$ be a real-valued, positive definite symmetric matrix. For every $j=1,\dots, m$, let $f_n^{(j)}:[n]^d \rightarrow \mathbb{R}$ be an admissible kernel, and assume that, for every $i,j=1,\dots,m$:
$$ d!\varphi(  Q_{\bs{S}}(f_n^{(i)})  Q_{\bs{S}}(f_n^{(j)})  ) \rightarrow C_{i,j},$$
$$ \E[  Q_{\bs{N}}(f_n^{(i)})  Q_{\bs{N}}(f_n^{(j)})  ]\rightarrow C_{i,j},$$
where $\bs{S}$ denotes a sequence of freely independent standard semicircular random variables, and $\bs{N}$ denotes a sequence of independent standard Gaussian random variables. Then, if $(s_1,\dots,s_m)$ denotes a semicircular system, with covariance given by $C$, and $\mathcal{N}(0,C)$ a denotes the multivariate normal distribution of covariance $C$, the following statements are equivalent as $n\rightarrow \infty$:
\begin{itemize}
\item[(i)] $(\sqrt{d!}Q_{\bs{S}}(f_n^{(1)}),\dots, \sqrt{d!} Q_{\bs{S}}(f_n^{(m)}) ) \stackrel{\text{Law}}{\rightarrow} (s_1,\dots,s_m)$ 
\item[(ii)] $(Q_{\bs{N}}(f_n^{(1)}), \dots,  Q_{\bs{N}}(f_n^{(m)}) )  \stackrel{\text{Law}}{\rightarrow}  \mathcal{N}(0,C)$.
\end{itemize}
\end{thm}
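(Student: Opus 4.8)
The plan is to reduce the multidimensional equivalence to two logically independent ingredients: a \emph{one-dimensional} transfer principle bridging the Wiener and Wigner chaoses, and the equivalence between componentwise and joint convergence \emph{within each chaos}, the latter being already available to us as Theorem \ref{TeoPeccatiTudor} (Gaussian) and Theorem \ref{Equi_Free} (semicircular). I would first observe that, in either direction, the joint convergence asserted in (i) or (ii) trivially forces the corresponding componentwise convergence, by composing with the coordinate projections and invoking the continuous mapping theorem. Hence the only genuine content is to pass from componentwise convergence in one chaos to joint convergence in the other.

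The core step is the one-dimensional transfer: for each fixed $j$, the convergence $\sqrt{d!}\,Q_{\bs{S}}(f_n^{(j)}) \xrightarrow{\mathrm{Law}} \mathcal{S}(0,1)$ should be equivalent to $Q_{\bs{N}}(f_n^{(j)}) \xrightarrow{\mathrm{Law}} \mathcal{N}(0,1)$ (note that admissibility of $f_n^{(j)}$ forces $d!\sum (f_n^{(j)})^2 = 1$, hence $C_{jj}=1$, so each component is standard). Since the standard Gaussian law satisfies Assumption {\bf (1)} with $\chi_4 = 0$ and the standard semicircular law satisfies Assumption {\bf (2)} with $\kappa_4 = 0$, Theorem \ref{superTeo1} and Theorem \ref{superTeo2} both apply: Gaussian convergence is equivalent to $\E[Q_{\bs{N}}(f_n^{(j)})^4] \to 3$, and semicircular convergence is equivalent to $(d!)^2\varphi(Q_{\bs{S}}(f_n^{(j)})^4) \to 2$. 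The transfer then rests on the fact, established in \cite{NouSpeiPec}, that each of these two fourth-moment conditions is equivalent to the vanishing of the contraction norms $\|f_n^{(j)}\star_r f_n^{(j)}\| \to 0$ for every $r=1,\dots,d-1$; since one and the same family of contractions governs both chaoses, the two componentwise convergences are equivalent.

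With the one-dimensional transfer in hand, the two implications assemble symmetrically. For (i) $\Rightarrow$ (ii): joint Wigner convergence yields componentwise semicircular convergence, the transfer upgrades this to componentwise Gaussian convergence $Q_{\bs{N}}(f_n^{(j)})\to\mathcal{N}(0,1)$, and then Theorem \ref{TeoPeccatiTudor} --- whose hypotheses are met because the prescribed covariances $\E[Q_{\bs{N}}(f_n^{(i)})Q_{\bs{N}}(f_n^{(j)})]\to C_{i,j}$ are assumed --- promotes componentwise convergence to joint convergence towards $\mathcal{N}(0,C)$. For (ii) $\Rightarrow$ (i) the same chain is run backwards, applying the transfer in the reverse direction and closing with the free Peccati--Tudor statement Theorem \ref{Equi_Free} to recover joint convergence to the semicircular system $(s_1,\dots,s_m)$, again legitimate because $d!\varphi(Q_{\bs{S}}(f_n^{(i)})Q_{\bs{S}}(f_n^{(j)}))\to C_{i,j}$ is assumed.

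I expect the one-dimensional transfer to be the main obstacle, and more precisely the combinatorial identity relating the two fourth moments through a common family of contraction norms; everything else is a bookkeeping combination of results already recorded in the excerpt. A secondary point to handle with care is the matching of the limiting covariance structures on the two sides: the hypotheses impose the \emph{same} matrix $C$ for the Gaussian and the semicircular covariances, and it is precisely this coincidence that guarantees that the joint limits produced by Theorems \ref{TeoPeccatiTudor} and \ref{Equi_Free} are the prescribed laws $\mathcal{N}(0,C)$ and $(s_1,\dots,s_m)$, rather than limits with mismatched covariance.
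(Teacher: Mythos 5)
The paper does not actually prove this statement: Theorem \ref{PrimoTransfer} is recalled verbatim from \cite[Theorem 1.6]{NouSpeiPec} and used as a black box (e.g.\ inside the proof of Theorem \ref{Transfer}), so there is no internal proof to compare against. Your reconstruction is correct and follows exactly the route of the cited source: reduce to componentwise convergence via Theorem \ref{TeoPeccatiTudor} and Theorem \ref{Equi_Free}, and bridge the two chaoses in dimension one through the fourth-moment theorems for the Gaussian and semicircular laws themselves ($\chi_4=0$, $\kappa_4=0$) together with the fact that both fourth-moment defects are nonnegative combinations of the same contraction norms. Two minor points to tidy up: the case $d=1$ (allowed in the statement) falls outside the fourth-moment machinery of Theorems \ref{superTeo1} and \ref{superTeo2}, which require $d\geq 2$, but is trivial since degree-one homogeneous sums are exactly Gaussian, respectively semicircular, with the prescribed covariances; and applying Theorem \ref{Equi_Free} requires the rescaling by $\sqrt{d!}$ so that $d!\varphi\big(Q_{\bs{S}}(f_n^{(i)})Q_{\bs{S}}(f_n^{(j)})\big)\to C_{i,j}$ matches its covariance hypothesis.
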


Thanks to Theorems \ref{ComponentJoint} and \ref{ComponentJointFree}, Theorem \ref{PrimoTransfer} can be completely generalized to a transfer principle for central convergence, between homogeneous sums $Q_{\X}(f_n)$, with $X$ satisfying Assumption {\bf (1)} and with non-negative kurtosis, over a classical probability space, and free homogeneous sums $\sqrt{d!}Q_{\Y}(f_n)$, with $Y$ satisfying Assumption {\bf (2)} and with non-negative free kurtosis, over a free probability space $(\mathcal{A},\varphi)$. Note that, since we are analysing the occurrence of the fourth moment phenomenon along with the universality property, we need to set $d\geq 2$.

\begin{thm}\label{Transfer}
Set $d\geq 2$. Let $X$ be a random variable (in the classical sense), satisfying Assumption {\bf (1)} and such that $\E[X^4]\geq 3$, and $Y$ be a free random variable  satisfying Assumption {\bf (2)} and $\varphi(Y^4)\geq 2$. Let $m\geq 1$, and for every $j=1,\dots,m$, let $f_n^{(j)}:[n]^d \rightarrow \mathbb{R}$  be a symmetric, vanishing on diagonal kernel, such that:
$$ \lim_{n\rightarrow\infty}d!\varphi\big(Q_{\Y}(f_n^{(i)})Q_{\Y}(f_n^{(j)})\big) = \dfrac{1}{d!} \lim_{n\rightarrow\infty}\E[Q_{\X}(f_n^{(i)})Q_{\X}(f_n^{(j)})] = C_{i,j}, \; \forall i,j=1,\dots,m,$$
with $C=(C_{i,j})_{i,j=1,\dots,m}$  real-valued, positive definite symmetric matrix. Then the following conditions are equivalent as $n \rightarrow \infty$:
\begin{itemize}
\item[(i)] $\big( Q_{\X}(f_n^{(1)}),\dots,  Q_{\X}(f_n^{(m)}) \big)  \stackrel{\text{Law}}{\longrightarrow} \mathcal{N}(0,C)$;
\item[(ii)] $\big( \sqrt{d!}Q_{\Y}(f_n^{(1)}),\dots,  \sqrt{d!}Q_{\Y}(f_n^{(m)})\big) \stackrel{\text{Law}}{\longrightarrow} (s_1,\dots,s_m)$,
\end{itemize}
with $(s_1,\dots,s_m)$ denoting a semicircular system with covariance determined by $C$.
\end{thm}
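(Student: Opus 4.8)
The plan is to deduce Theorem \ref{Transfer} by \emph{funnelling} both conditions (i) and (ii) through the corresponding Gaussian and semicircular homogeneous sums, and then bridging the two pictures with the Wiener--Wigner transfer principle of Theorem \ref{PrimoTransfer}. The key observation is that there is no direct comparison between the laws of $Q_{\X}(f_n^{(j)})$ and of $Q_{\Y}(f_n^{(j)})$: the only transfer mechanism at our disposal, Theorem \ref{PrimoTransfer}, compares the Gaussian chaos generated by a sequence $\bs{N}$ of i.i.d.\ standard Gaussians with the Wigner chaos generated by a sequence $\bs{S}$ of freely independent standard semicirculars. Everything therefore rests on using the hypotheses $\E[X^4]\ge 3$ and $\varphi(Y^4)\ge 2$ to replace $X$ by $\bs{N}$ and $Y$ by $\bs{S}$ without affecting central convergence. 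Throughout I would first replace each $f_n^{(j)}$ by its admissible normalisation $\hat f_n^{(j)}=f_n^{(j)}/\sqrt{d!\sum (f_n^{(j)})^2}$; since $C$ is positive definite the normalising constants converge to $\sqrt{C_{j,j}}>0$, so this rescaling is harmless, and it turns the matching hypothesis into the exact coincidence of the limiting Wiener and Wigner covariances required by Theorem \ref{PrimoTransfer}.

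\emph{Classical side.} First I would invoke Theorem \ref{ComponentJoint} (legitimate since $\E[X^4]\ge 3$) to see that (i) is equivalent to the componentwise convergences $Q_{\X}(f_n^{(j)})\stackrel{\text{Law}}{\longrightarrow}\mathcal{N}(0,C_{j,j})$, $j=1,\dots,m$. Next, because $X$ satisfies the Fourth Moment Theorem and is universal at order $d$ (Theorem \ref{superTeo1}), and because the Gaussian law enjoys the same two properties (\cite{NualartPeccati,NourdinPeccatiReinert}), each such componentwise convergence forces $\tau_n^{(j)}\to 0$ and is therefore equivalent, via the invariance principle of Theorem \ref{invMossel} (moments being matched once $\tau_n^{(j)}\to 0$), to the Gaussian componentwise convergence $Q_{\bs{N}}(f_n^{(j)})\stackrel{\text{Law}}{\longrightarrow}\mathcal{N}(0,C_{j,j})$. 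Finally, Theorem \ref{TeoPeccatiTudor} upgrades the latter to the joint convergence $(Q_{\bs{N}}(f_n^{(1)}),\dots,Q_{\bs{N}}(f_n^{(m)}))\stackrel{\text{Law}}{\longrightarrow}\mathcal{N}(0,C)$. Thus (i) is equivalent to joint central convergence of the Gaussian sums.

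\emph{Free side.} Symmetrically, Theorem \ref{ComponentJointFree} (legitimate since $\varphi(Y^4)\ge 2$) shows that (ii) is equivalent to the componentwise convergences $\sqrt{d!}\,Q_{\Y}(f_n^{(j)})\stackrel{\text{Law}}{\longrightarrow}s_j$. Using that $Y$ satisfies the free Fourth Moment Theorem and is free universal (Theorem \ref{superTeo2}), as does the semicircular law (\cite{NourdinDeya}), these are equivalent---through the free invariance principle of Theorem \ref{Multiinvariance2}---to the semicircular componentwise convergences $\sqrt{d!}\,Q_{\bs{S}}(f_n^{(j)})\stackrel{\text{Law}}{\longrightarrow}s_j$, which by Theorem \ref{Equi_Free} are in turn equivalent to the joint convergence $(\sqrt{d!}\,Q_{\bs{S}}(f_n^{(1)}),\dots,\sqrt{d!}\,Q_{\bs{S}}(f_n^{(m)}))\stackrel{\text{Law}}{\longrightarrow}(s_1,\dots,s_m)$. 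Thus (ii) is equivalent to joint central convergence of the semicircular sums.

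\emph{Conclusion and main obstacle.} It then remains to glue the two chains with Theorem \ref{PrimoTransfer}, whose covariance hypotheses hold precisely because of the matching condition assumed in the statement: the joint Gaussian convergence $(Q_{\bs{N}}(f_n^{(j)}))_{j}\to\mathcal{N}(0,C)$ is equivalent to the joint semicircular convergence $(\sqrt{d!}\,Q_{\bs{S}}(f_n^{(j)}))_{j}\to(s_1,\dots,s_m)$. Chaining the three equivalences yields (i) $\iff$ (ii). I expect the only genuine difficulties to be bookkeeping ones: checking that the universality statements are symmetric enough to give \emph{both} implications $X\leftrightarrow\bs{N}$ and $Y\leftrightarrow\bs{S}$ at the componentwise level (this is exactly where $\chi_4(X)\ge 0$ and $\kappa_4(Y)\ge 0$ are indispensable, since without them one implication of the Fourth Moment Theorem may fail), and keeping the normalisations---the factor $d!$, the $\sqrt{d!}$ scaling, and the passage to admissible kernels---consistent, so that the single matrix $C$ simultaneously governs all four pictures (Wiener, Wigner, $X$, and $Y$).
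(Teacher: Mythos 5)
Your proposal is correct and follows essentially the same route as the paper: both arguments funnel condition (i) through the Gaussian sums via Theorem \ref{superTeo1} (universality plus Fourth Moment Theorem) and Theorem \ref{TeoPeccatiTudor}, cross over with the Wiener--Wigner transfer principle of Theorem \ref{PrimoTransfer}, and descend to the $\bs{Y}$-sums via Theorem \ref{Equi_Free}, the universality of the semicircular law and Theorem \ref{ComponentJointFree}, with the reverse implication obtained by symmetry. Your explicit remarks on normalising the kernels and on matching the covariances are sensible bookkeeping that the paper leaves implicit, but they do not change the substance of the argument.
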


\begin{proof}
Assume first that $(i)$ holds: then, for every $j=1,\dots, m$, $Q_{\X}(f_n^{(j)}) \stackrel{\text{Law}}{\longrightarrow} \mathcal{N}(0,C_{j,j})$, implying,  by virtue of Theorem \ref{superTeo1}, that $Q_{\NN}(f_n^{(j)}) \stackrel{\text{Law}}{\longrightarrow} \mathcal{N}(0,C_{j,j})$. By virtue of Theorem \ref{TeoPeccatiTudor}, then, we have the joint convergence 
$(Q_{\NN}(f_n^{(1)}),\dots,Q_{\NN}(f_n^{(m)})) \stackrel{\text{Law}}{\longrightarrow} \mathcal{N}(0,C)$, which is, in turn, equivalent to the joint convergence $\big( \sqrt{d!}Q_{\SSw}(f_n^{(1)}),\dots,  \sqrt{d!}Q_{\SSw}(f_n^{(m)})\big) \stackrel{\text{Law}}{\longrightarrow} (s_1,\dots,s_m)$, by virtue of \cite[Theorem 1.6]{NouSpeiPec}. Finally, Theorem \ref{Equi_Free}  implies that \linebreak $\sqrt{d!}Q_{\Y}(f_n^{(j)}) \stackrel{\text{Law}}{\longrightarrow} s_j$ and the conclusion follows by Theorem \ref{ComponentJointFree}.

To prove the reverse implication, start with Theorem \ref{superTeo2} and consider Theorem \ref{ComponentJoint} instead of Theorems \ref{superTeo1} and \ref{ComponentJointFree}, respectively.\\
\end{proof}

\begin{rmk}
Due to the assumption $\E[X^3]=0$ in the statement of Theorem \ref{superTeo1}, this setting does not fit the Poisson homogeneous chaos. In view of the Transfer principle provided with Theorem \ref{Transfer}, this failure is consistent with the lack of a transfer principle,  for central convergence, between classical and free Poisson chaos, as highlighted with a counterexample in \cite{Solesne}.
\end{rmk}

\bibliographystyle{plain}

\end{document}